\newcommand{\eh}{\hfill}\newlength{\sperr}
\newenvironment{proof}{{\settowidth{\sperr}{\bf\rm
Proof}%
\par\addvspace{0.3cm}\noindent\parbox[t]{1.3\sperr}
{\bf\rm P\eh r\eh o\eh o\eh f\eh }%
}}{\nopagebreak\mbox{}
$\blackbox$\par\addvspace{0.3cm}}
\numberwithin{equation}{section}
\newtheorem{theorem}{Theorem}
\newtheorem{corollary}[theorem]{Corollary}
\newtheorem{example}[theorem]{Example}
\newtheorem{proposition}[theorem]{Proposition}
\newtheorem{remark}[theorem]{Remark}
\def\di{\mathrm{diag}}
\def\a{\alpha}
\def\b{\beta}
\def\S{\Sigma}
\def\Lam{\Lambda}
\def\om{\omega}
\def\Om{\Omega}
\def\de{\delta}
\def\vp{\varphi}
\def\vt{\vartheta}
\def\wh{\widehat}
\def\wt{\widetilde}
\def\ov{\overline}
\def\p{\partial}
\def\BC{{\mathbb C}}
\def\cla{{\mathcal A}}
\newcommand{\cV}{{\mathcal V}}
\newcommand{\wti}{\widetilde}
\title{KdV equation in the 
quarter--plane:\\
evolution of the Weyl
functions
and unbounded solutions}
\author{A. Sakhnovich \footnote{ E-mail:   Oleksandr.Sakhnovych@univie.ac.at}}
\date{}
\begin{document}

\maketitle

\noindent {\bf Abstract.} The matrix KdV equation with a negative dispersion term
is considered in the right upper quarter--plane. The evolution law is derived for
the Weyl function of a corresponding auxiliary linear system. 
Using the low energy asymptotics of the Weyl functions, the unboundedness
of solutions is obtained for some classes of  the initial--boundary conditions. 
\vspace*{0.5cm}

\noindent {\bf Key words:} KdV, initial--boundary value problem,
Weyl function, evolution, low--energy asymptotics, blow--up solution

\noindent {\bf AMS subject classification:} 35Q53, 34B20, 35G31

\vspace*{1cm}

\setcounter{equation}{0}
\section{Introduction} \label{s1}
We consider the matrix KdV equation with the minus sign in front of the dispersion term $u_{xxx}$:
\begin{equation} \label{1.1}
u_t+3(uu_x+u_xu)-u_{xxx}=0,
\end{equation}
where $u(x,t)$ is an $m \times m$ matrix function. Equation (\ref{1.1})  is the compatibility condition
of the auxiliary linear systems
\begin{align}
\Phi_x(x,t, z) &=G(x,t, z)\Phi(x,t, z), \label{1.2} \\
\Phi_t(x,t, z) &=F(x,t, z)\Phi(x,t, z), \label{1.2a}
\end{align}
\begin{eqnarray} \label{1.3}
&&G:= \left[
\begin{array}{cc}
0 & I_m \\ u- z I_m & 0
\end{array}
\right], \\
\label{1.4}
&& F: = \left[
\begin{array}{cc}
u_x & -2(u+2 z I_m)\\u_{xx}- 2 (u+2 z I_m)(u- z I_m)& -u_x
\end{array}
\right],
\end{eqnarray}
where
$I_m$ is the $m \times m$ identity matrix.
 In other words equation (\ref{1.1}) is equivalent to the zero curvature equation
\begin{equation} \label{1.4a}
G_t-F_x+[G,F]=0, \quad [G,F]:=GF-FG,
\end{equation}
where $G$ and $F$ are given by  (\ref{1.3}) and (\ref{1.4}), respectively.

Initial-boundary value problems for the integrable nonlinear equations
(and KdV equation, in particular) are of great interest, see, for instance, 
\cite{BW83, BoFo, FoL, SaA7, Sa88, Ton} and references therein.
System \eqref{1.2}, \eqref{1.3} is equivalent 
to the canonical system \eqref{1.13} (and to the Schr\"odinger equation),
and in this paper we derive
the evolution $M(t,z)$ of the Weyl function of this system. This evolution
is an important component of the solution of the initial-boundary value
problem. For simplicity, we derive the evolution  under condition that $F$ and $G$
are continuously differentiable, though the requirement 
of the continuous differentiability could be weakened using the results from \cite{SaAF}. 

If $u(0,t)=u_{xx}(0,t)=0$, then system
\eqref{1.2a} at $x=0$ is equivalent to a Dirac system and its Weyl function
is expressed via $M(0,z)$ (see formula \eqref{2.21}).  We apply \eqref{2.21}
and low energy asymptotics of  $M(0,z)$ to show 
the unboundedness
of the KdV solutions in the quarter--plane for some classes of simple
initial conditions $u(x,0)$.

Our Weyl function $M(t,z)$ is connected with the Weyl function from \cite{CG0}
(the latter being denoted here by ${\cal{M}}(t,z)$) via the linear fractional transformation $M=({\cal{M}}-I_m)({\cal{M}}+I_m)^{-1}$.
We note that the high energy asymptotics of the Weyl functions
was actively studied (see \cite{CG0, CG1, KoST2, SaA02} and references therein)
following the seminal papers \cite{GS, GeSi}. Though the low energy asymptotics of the Weyl functions is used in the present paper, the high energy asymptotics
(namely, an important result on asymptotics of the Weyl function in terms of
the values of $u$ and its derivatives at $x=0$  from  \cite{CG0})
jointly with the evolution of the Weyl function could also prove useful for 
the analysis of the initial-boundary conditions.

We discuss some background in Section \ref{sb},
obtain the evolution law in Section \ref{kdv}, and study the
unboundedness of the solutions in Section \ref{non}

\vspace*{0.5cm}
\setcounter{equation}{0}
\section{Some Background} \label{sb}

Let us  normalize the fundamental solution $\Psi$ of the equation \eqref{1.2}
by introducing
\begin{equation}
\Psi(x,t, z) = \Phi(x,t, z) \Phi(0,t, z)^{-1} \label{1.4b}
\end{equation}
satisfying the initial condition
\begin{equation} \label{1.5}
\Psi(0,t, z)=I_{2m}.
\end{equation}

Suppose, $G$ and $F$ are continuously differentiable on the half--strip $0 \leq x<\infty$,
$0 \leq t < {\mathbf t}\leq \infty$ and (\ref{1.4a}) holds. Then, according to section 12.1 \cite{Sa99} (see also \cite{Sa87, Sa88}) we have
\begin{equation} \label{1.6}
\Psi(x,t, z)=V(x,t, z)\Psi(x,0, z)V(0,t, z)^{-1},
\end{equation}
where the $2m \times 2m$ matrix function $V$ satisfies relations
\begin{equation} \label{1.7}
V_t(x,t, z)=F(x,t, z)V(x,t, z), \quad V(x,0, z)=I_{2m}.
\end{equation}
Introduce the matrices
\begin{equation} \label{1.8}
J:= \left[
\begin{array}{cc}
0 & I_m \\ I_m & 0
\end{array}
\right], \quad \Sigma_3: =\left[
\begin{array}{cc}
I_m & 0\\ 0 & -I_m
\end{array}
\right],
\end{equation}
\begin{equation} \label{1.9}
J_1=TJT^*=i \left[
\begin{array}{cc}
0 & -I_m \\ I_m & 0
\end{array}
\right], \quad
T:=\frac{1}{\sqrt{2}}
\left[
\begin{array}{cc}
i I_m & I_m \\ i I_m & -I_m
\end{array}
\right].
\end{equation}
Further we shall consider the case of the self-adjoint (real-valued for $m=1$) $u$:
\begin{equation} \label{1.10}
u(x,t)=u(x,t)^*, \, {\mathrm{i.e.}}, \, \frac{\p}{\p x} \Big(\Psi(x,t,0)^*J_1\Psi(x,t,0)\Big)=0.
\end{equation}
From (\ref{1.5}), (\ref{1.9}) and (\ref{1.10}) it follows that
\begin{equation} \label{1.11}
T^*\Psi(x,t,0)^*J_1\Psi(x,t,0)T=T^*J_1T=J.
\end{equation}
Putting
\begin{equation} \label{1.12}
\wt \Psi(x,t, z)=\big(\Psi(x,t,0)T\big)^{-1}\Psi(x,t, z)T,
\end{equation}
and taking into account (\ref{1.2}), (\ref{1.3}), (\ref{1.11}) and (\ref{1.12})
we see that $\wt \Psi(x,t, z)$ is the fundamental solution
of the canonical system
\begin{equation} \label{1.13}
\wt \Psi_x(x,t, z)=i z J H(x,t)\wt \Psi(x,t, z), \quad \wt \Psi(0,t, z)=I_{2m},
\end{equation}
where
\begin{equation} \label{1.14}
H(x,t)=T^*\Psi(x,t,0)^* \left[
\begin{array}{cc}
I_m & 0 \\ 0 & 0
\end{array}
\right] \Psi(x,t,0)T \geq 0.
\end{equation}
Moreover, $H$ satisfies \cite{Sa87} the positivity condition
\begin{equation} \label{1.15}
\int_0^lH(s,t) ds>0 \quad (l>0).
\end{equation}
Indeed, for any $h \in \BC^{2m}$, $h\not=0$ we have,
\begin{equation} \label{1.15.1}
h^*H(s,t)h=g(s,t)^*g(s,t), \quad g(s,t):=[I_m \quad 0] \Psi(s,t,0) Th,
\end{equation}
where, according to (\ref{1.2}), (\ref{1.3}), and \eqref{1.5}, the relations
\begin{equation} \label{1.15.2}
g_{ss}(s,t)=u(s,t)g(s,t), \quad \left[ \begin{array}{c}
g(0,t) \\ g_s(0,t)
\end{array}
\right] =Th \not= 0
\end{equation}
hold. Inequality
(\ref{1.15}) follows from (\ref{1.15.1}) and (\ref{1.15.2}) .

By \eqref{1.15}, the linear fractional transformations
\begin{align} \label{1.16}
\begin{split}
M( l, t, z) &= i\Big(\cla_{11}(l, t, z)P_l( t, z)+\cla_{12}(l,t, z)Q_l(t, z)\Big) \\
& \quad \times \Big(\cla_{21}(l,t, z)P_l(t, z)+\cla_{22}(l,t, z)Q_l(t, z)\Big)^{-1},
\quad \Im (z) >0,
\end{split}
\end{align}
where the matrices $\cla_{kj}$ are the $m \times m$ blocks of $\cla$,
\begin{equation} \label{1.17}
\cla(l,t, z):=\wt \Psi(l,t,\ov z)^*,
\end{equation}
and $P_l$, $Q_l$ are meromorphic nonsingular pairs with property-$J$,
\begin{equation} \label{1.18}
P_l^*P_l+Q_l^*Q_l>0, \quad P_l^*Q_l+Q_l^*P_l \geq 0,
\end{equation}
are well-defined for $\Im(z)>0$.
The matrix functions $M$ are Herglotz (Nevanlinna) functions, that is, $\Im(M( z)) \geq 0$
in $\BC_+$, and they are called Weyl--functions of the canonical system on
the interval $(0, \, l)$. Further we shall assume that $u$ is bounded:
\begin{equation} \label{2.2'}
\sup_{0 \leq x<\infty, \, 0 \leq t < {\mathbf t}} \| u(x,t) \|<C.
\end{equation}
Then, by (\ref{1.13}) and (\ref{1.15}) there is a unique limit of the functions
$M( l, t, z)$, which is independent of the choice of the pairs $P_l$, $Q_l$
with property-$J$:
\begin{equation} \label{1.19}
\lim_{l \to \infty} M( l, t, z)=M(t, z).
\end{equation}
Fore a detailed proof of \eqref{1.19} see p. 177 in  \cite{Sa99}, where  the proof  of 
a similar formula (1.18) (condition b)) from p. 169 is given.

Note that one can omit the variable $t$ in formulas   \eqref{1.2},  \eqref{1.4b},
 \eqref{1.5},  \eqref{1.12}--\eqref{1.19} while considering
 a certain subclass of canonical systems.
The limit $M(z)=\lim_{l \to \infty} M( l, z)$ is called the Weyl--function of the system (\ref{1.13})
on the semi-axis $x>0$. It has the property (see formula (1.24) on p. 121 in \cite{Sa99})
\begin{equation} \label{r3}
\int_0^\infty
\left[ \begin{array}{lr}
I_m &i M (z)^* \end{array} \right]
\wt  \Psi(x, z)^*H(x)\wt \Psi(x, z)
\left[ \begin{array}{c}
I_m \\ -i M (z) \end{array} \right]
dx            < \infty, \quad z \in \BC_+.
\end{equation}
The function $M(z)$ is also the Weyl--function of the Sturm--Liouville
system 
\begin{equation} \label{r1}
-Y_{xx}(x,z)+u(x)Y(x,z)=zY(x,z),
\end{equation}
where the matrix function $u$ coincides with the $u$ in \eqref{1.3}.
In particular, formula \eqref{r3} can be rewritten in the form
\begin{equation} \label{r4}
\int_0^\infty
\left[ \begin{array}{lr}
I_m &i M (z)^* \end{array} \right]
Y(x,z)^*Y(x,z)
\left[ \begin{array}{c}
I_m \\ -i M (z) \end{array} \right]
dx            < \infty, \quad z \in \BC_+,
\end{equation}
where $Y$ is the $m \times 2m$ solution of  \eqref{r4} normalized
by the condition 
\begin{equation} \label{r5}
Y(0,z)=(\sqrt{2})^{-1}[iI_m \quad I_m], \quad Y_x(0,z)=(\sqrt{2})^{-1}[iI_m \quad -I_m].
\end{equation}

We also recall that the Weyl--function $M_D(\zeta)$ of the Dirac--type system on the semi-axis
\begin{equation} \label{1.20}
\frac{d}{dt}W(t,\zeta )=i[\zeta \Sigma_3 + \Sigma_3\cV(t)]W(t,\zeta ), \quad W(0,\zeta)=I_{2m}, \quad \cV=\left[\begin{array}{cc}
0&v\\v^{*}&0\end{array}\right],
\end{equation}
where $\cV$ is locally summable, is uniquely defined by the inequality
\begin{equation} \label{1.21}
\int_0^{\infty}[I_m \quad iM_D(\zeta)^*]K W(t, \zeta)^*  W(t, \zeta)K^* \left[
\begin{array}{c}
I_m\\ -iM_D(\zeta)
\end{array}
\right]dt<\infty,
\end{equation}
\begin{equation} \label{1.22}
\Im (\zeta) >0, \quad K:=\frac{1}{\sqrt{2}}
\left[
\begin{array}{cc}
I_m & -I_m \\ I_m & I_m
\end{array}
\right].
\end{equation}
See the procedure to recover $\cV$ from $M_D$ in \cite{SaA02, Sa99} and the references therein.

Using (\ref{1.6}) and (\ref{1.19}) the evolution of the Weyl--function
$M(t, z)$ $(t>0)$ was derived in \cite{Sa87} -\cite{Sa99} for the
KdV equation $u_t-3(uu_x+u_xu)+u_{xxx}=0$
with the plus sign in front of the dispersion term.
Moreover, the initial--boundary problem $u(x,0)=f(x)$, $u(0,t)=u_{xx}(0,t)=0$
was treated in \cite{Sa87} for the scalar case $u_t-6uu_x+u_{xxx}=0$.
We shall modify these results for the case of the KdV equation (\ref{1.1}),
where this number of the initial--boundary conditions will be appropriate
(see \cite{BW83, Ton}).


\vspace*{0.5cm}
\setcounter{equation}{0}
\section{The KdV Equation with a Negative Dispersion Term} \label{kdv}


Denote the Weyl--function of system (\ref{1.13}) at $t=0$ by $M(0, z)$
and put
\begin{eqnarray} \label{2.1}
&& R(l,t, z):=\big(\Psi(l,t,0)T\big)^*\big(V(l,t, \ov z)^*\big)^{-1}\Big(\big(\Psi(l,0,0)T\big)^*\Big)^{-1},
\\ \label{2.2} &&R(t, z)=\left[ \begin{array}{cc}
r_{11}(t, z) & r_{12}(t, z) \\ r_{21}(t, z) & r_{22}(t, z)
\end{array}
\right] :=R(0,t, z),
\end{eqnarray}
where $r_{kj}$ are $m \times m$ blocks of $R$.


\vspace*{0.25cm}

\begin{proposition} \label{evol}
Let the bounded $m \times m$ matrix function $u$
satisfy the KdV equation (\ref{1.1}) on the half--strip
$0 \leq x<\infty$,
$0 \leq t < {\mathbf t}\leq \infty$.
Assume that
the corresponding matrix functions
$G$ and $F$ given by (\ref{1.3}) and (\ref{1.4}) are continuously
differentiable. Then the evolution of the Weyl--function $M(t, z)$
is given by the formula
\begin{equation} \label{2.3}
M(t, z)=i\Big((-i)r_{11}(t, z)M(0, z)+r_{12}(t, z)\Big)
\Big((-i)r_{21}(t, z)M(0, z)+r_{22}(t, z)\Big)^{-1}.
\end{equation}
\end{proposition}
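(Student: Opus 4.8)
The plan is to propagate the linear--fractional description \eqref{1.16} of $M(l,t,z)$ back to time $0$ by means of the formula \eqref{1.6}, and then let $l\to\infty$ using \eqref{1.19}. The engine is a factorization $\cla(l,t,z)=R(t,z)\,\cla(l,0,z)\,R(l,t,z)^{-1}$, in which the left factor is exactly the $l=0$ value $R(t,z)=R(0,t,z)$ from \eqref{2.2} and the right factor is $R(l,t,z)^{-1}$ from \eqref{2.1}. Once this is in hand, substituting it into \eqref{1.16} turns the time--$t$ transformation into the M\"obius action of $R(t,z)$ composed with the time--$0$ transformation, and the explicit blocks $r_{kj}$ produce \eqref{2.3}.

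To obtain the factorization I would evaluate \eqref{1.12} at the spectral point $\ov z$, substitute \eqref{1.6} for $\Psi(l,t,\ov z)$, and use \eqref{1.12} at $t=0$ (solved for $\Psi(l,0,\ov z)$) to arrive at
\[
\wt\Psi(l,t,\ov z)=(\Psi(l,t,0)T)^{-1}V(l,t,\ov z)(\Psi(l,0,0)T)\,\wt\Psi(l,0,\ov z)\,T^{-1}V(0,t,\ov z)^{-1}T .
\]
Taking adjoints and recalling $\cla=\wt\Psi(\cdot,\cdot,\ov z)^*$ from \eqref{1.17}, the rightmost block $(\Psi(l,0,0)T)^*V(l,t,\ov z)^*((\Psi(l,t,0)T)^*)^{-1}$ is precisely $R(l,t,z)^{-1}$ by \eqref{2.1}, while the leftmost block $T^*(V(0,t,\ov z)^*)^{-1}(T^*)^{-1}$ equals $R(0,t,z)=R(t,z)$ because $\Psi(0,t,0)=\Psi(0,0,0)=I_{2m}$ by \eqref{1.5}. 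This is the desired identity.

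Next I substitute this into \eqref{1.16}. Setting $\binom{\hat P_l}{\hat Q_l}:=R(l,t,z)^{-1}\binom{P_l}{Q_l}$, the column $\cla(l,t,z)\binom{P_l}{Q_l}$ becomes $R(t,z)$ applied to $\cla(l,0,z)\binom{\hat P_l}{\hat Q_l}$. Since the latter column defines $M(l,0,z)$ through the $t=0$ instance of \eqref{1.16} with the pair $(\hat P_l,\hat Q_l)$, a short computation (factoring out the bottom block and tracking the two factors of $i$) gives the finite--$l$ identity $M(l,t,z)=i\big((-i)r_{11}M(l,0,z)+r_{12}\big)\big((-i)r_{21}M(l,0,z)+r_{22}\big)^{-1}$, in which the coefficients $r_{kj}=r_{kj}(t,z)$ are independent of $l$. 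Letting $l\to\infty$ and applying \eqref{1.19} on both sides yields \eqref{2.3}.

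The delicate step, and the one I expect to be the main obstacle, is the passage to the limit in the previous line: one must know that $M(l,0,z)$ \emph{computed with the reparametrized pair} $(\hat P_l,\hat Q_l)$ still tends to $M(0,z)$. By \eqref{1.19} this holds once $(\hat P_l,\hat Q_l)$ again has property--$J$, i.e. satisfies \eqref{1.18}. Here the spatial factors cause no trouble: the self-adjointness of $u$ gives, through \eqref{1.10}--\eqref{1.11} and $J_1=TJT^*$ with unitary $T$, the relation $\Psi(l,\cdot,0)J_1\Psi(l,\cdot,0)^*=J_1$, which reduces the preservation of property--$J$ to the question of whether $V(l,t,\ov z)J_1V(l,t,\ov z)^*=J_1$. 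A direct check shows $V(l,t,\zeta)$ is $J_1$-unitary only for \emph{real} $\zeta$, so for $\Im z>0$ exact form--invariance fails and one cannot simply transport the pairs. The resolution must instead lean on the limit--point structure guaranteed by the boundedness \eqref{2.2'} and the positivity \eqref{1.15}: the Weyl disks collapse to the single point $M(0,z)$ and the limit \eqref{1.19} does not depend on the admissible pair, exactly as in the treatment of the $+$--sign equation in \cite{Sa99} that the present result modifies. It is this independence, rather than a naive invariance of the pairs under $R(l,t,z)$, that must be made to carry the argument.
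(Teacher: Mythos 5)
Your factorization $\cla(l,t,z)=R(t,z)\,\cla(l,0,z)\,R(l,t,z)^{-1}$ is exactly the paper's \eqref{2.4}, and the finite-$l$ M\"obius identity followed by $l\to\infty$ is the right skeleton. But the step you yourself flag as delicate is where the argument genuinely breaks, and your proposed resolution does not close it. The pair-independence in \eqref{1.19} is guaranteed only \emph{within} the class of nonsingular pairs satisfying property-$J$, i.e.\ \eqref{1.18}; it says nothing about the value of the transformation \eqref{1.16} on a pair that fails \eqref{1.18}. Appealing to the collapse of the Weyl disks is therefore circular: to invoke pair-independence you must first know the transported pairs are admissible, which is precisely the open question. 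Moreover, your transport goes the wrong way: with $(\hat P_l,\hat Q_l)$ defined through multiplication by $R(l,t,z)^{-1}$, preservation of \eqref{1.18} would require $\big(R^{-1}\big)^*JR^{-1}\geq J$ on the relevant vectors, whereas the true inequality points in the opposite direction.

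What the paper supplies, and what is missing from your write-up, is the $J$-expanding property \eqref{2.10}: $R(l,t,z)^*JR(l,t,z)>J$ for $\Im (z)>0$, $\Re (z)>C/4$. It is obtained not by asking $V$ to be $J_1$-unitary (it is not, as you correctly observe), but by differentiating $V(l,t,\ov z)^{-1}J_1\big(V(l,t,\ov z)^{-1}\big)^*$ in $t$ and computing $FJ_1+J_1F^*$ explicitly from \eqref{1.4}, which gives \eqref{2.7}; the boundedness assumption \eqref{2.2'} makes the block $2(z+\ov z)I_m-u(l,t)$ positive definite once $\Re (z)>C/4$, yielding \eqref{2.8}, then \eqref{2.9} and \eqref{2.10}. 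With \eqref{2.10} in hand one transports admissible pairs \emph{forward}: starting from property-$J$ pairs at $t=0$ and multiplying them by $R(l,t,z)$, the inequality $R^*JR>J$ shows the resulting pairs again satisfy \eqref{1.18} (nonsingularity is clear since $R$ is invertible), so both sides of the finite-$l$ identity are legitimate values of \eqref{1.16} and the limit \eqref{1.19} applies to each. This establishes \eqref{2.3} only on the subdomain $\{\Im (z)>0,\ \Re (z)>C/4\}$; the final ingredient, which your proposal also lacks because it never restricts the domain, is the extension of \eqref{2.3} to all of $\BC_+$ by analyticity of the Weyl functions.
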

\begin{proof}.
Taking into account (\ref{1.12}) and (\ref{1.17}), rewrite formula (\ref{1.6})
in the form
\begin{equation} \label{2.4}
\cla(l,t, z)R(l,t, z)=R(t, z)\cla(l,0, z).
\end{equation}
To show that $R$ is $J$-expanding in some domain in $\BC_+$,
we shall use the equation
\begin{equation} \label{2.5}
\frac{\p}{\p t}\big(V(l,t, \ov z)^{-1}\big)=-V(l,t, \ov z)^{-1}F(l,t,\ov z)
\end{equation}
From (\ref{2.5}) it follows that
\begin{eqnarray} \label{2.6}
&&\frac{\p}{\p t}\Big(V(l,t, \ov z)^{-1}J_1\big(V(l,t, \ov z)^{-1}\big)^*\Big)
\\ \nonumber &&
=
-V(l,t, \ov z)^{-1}\big(F(l,t,\ov z)J_1+J_1F(l,t,\ov z)^*\big)\big(V(l,t, \ov z)^{-1}\big)^*.
\end{eqnarray}
By (\ref{1.4}) and the first relation in (\ref{1.10}) we have
\begin{equation} \label{2.7}
F(l,t,\ov z)J_1+J_1F(l,t,\ov z)^*=2i(z -\ov z) \left[
\begin{array}{cc}
2I_m & 0 \\ 0 & 2(z + \ov z)I_m-u(l,t)
\end{array}
\right].
\end{equation}
Taking into account (\ref{2.2'}) and (\ref{2.7}) we derive
\begin{equation} \label{2.8}
-\Big(F(l,t,\ov z)J_1+J_1F(l,t,\ov z)^*\Big)>0 \quad {\mathrm{for}} \quad \Im (z) >0, \, \Re z >C/4.
\end{equation}
In view of (\ref{2.6}), (\ref{2.8}) and the second relation in (\ref{1.7}) we get
\begin{equation} \label{2.9}
V(l,t, \ov z)^{-1}J_1\big(V(l,t, \ov z)^{-1}\big)^*>J_1 \quad {\mathrm{for}} \quad \Im (z) >0, \, \Re z >C/4.
\end{equation}
According to (\ref{1.11}), (\ref{2.1}) and (\ref{2.9}) the inequality
\begin{equation} \label{2.10}
R(l,t, z)^*JR(l,t, z)>J \quad {\mathrm{for}} \quad \Im (z) >0, \, \Re z >C/4
\end{equation}
is true. By (\ref{1.16}), (\ref{1.19}), (\ref{2.4}) and (\ref{2.10}), we derive (\ref{2.3})
for $z$ in the domain $ \Im (z) >0, \, \Re z >C/4$. In view of the analyticity of the Weyl-functions,
it follows that (\ref{2.3}) is valid everywhere in $\BC_+$.
\end{proof}

Consider now the particular case of the initial--boundary value problem in the quarter--plane:
\begin{equation} \label{2.11}
u(x, 0)=f(x), \quad u(0,t)=u_{xx}(0,t)=0 \quad (0 \leq x <\infty, \, 0 \leq t <\infty).
\end{equation}
According to (\ref{1.5}), (\ref{2.1}), (\ref{2.2}) and (\ref{2.5}) we have
\begin{align} \label{2.12}
& R(t, z)=T^*\big(V(0,t, \ov z)^*\big)^{-1}T, \\ \label{2.13}
& \frac{d}{d t}R(t, z)=-T^*F(0,t,\ov z)^*TR(t, z), \quad R(0, z)=I_{2m}.
\end{align}
By (\ref{1.4}) and (\ref{2.11}) one can see that
\begin{equation} \label{2.14}
-F(0,t,\ov z)^*=\left[ \begin{array}{cc}
-u_x(0,t) &- 4 z^2 I_m \\ 4 z I_m & u_x(0,t)
\end{array}
\right].
\end{equation}
Following \cite{Sa87}, let us transform (\ref{2.13}) into the Dirac--type system.
Note for that purpose, that
\begin{eqnarray} \label{2.15}\displaystyle
& T\di \{I_m, \, \sqrt{z}I_m\}\left[ \begin{array}{cc}
0 & - z^2 I_m \\ z I_m & 0
\end{array}
\right] \di \{I_m, \,\frac{1}{ \sqrt{z}}I_m\}T^*=-i z^{\frac{3}{2}}\Sigma_3,& \\
\label{2.16} &
T\di \{I_m, \, \sqrt{z}I_m\} \Sigma_3 \di \{I_m, \,\frac{1}{ \sqrt{z}}I_m\}T^*=J,&
\end{eqnarray}
where $J$ and $j$ are defined in (\ref{1.8}) and diag means a block diagonal matrix.
We consider $ z \in \BC_+$ and choose the branch $\sqrt{z}$ so that $\sqrt{z} \in \BC_+$.
Now, put
\begin{align}
& \wt R(t, \zeta):=Z( z)^{-1}R(t, z)Z( z), \quad
Z( z):=T^*\di \{I_m, \,\frac{1}{ \sqrt{z}}I_m\}T^*, \label{2.17} \\
& \displaystyle\zeta:=-4 z^{\frac{3}{2}}. \label{2.18}
\end{align}
From (\ref{2.13})-(\ref{2.18})
it follows that $\wt R$ satisfies the Dirac--type system
\begin{equation} \label{2.19}
\frac{d}{d t}\wt R(t, \zeta) = [i\zeta \Sigma_3 -\di\{u_x(0,t), \, u_x(0,t)\}J] \wt R(t, \zeta),
\quad \wt R(0, \zeta)=I_{2m}.
\end{equation}
Recall that the Weyl--function $M_D$ of the Dirac--type system is defined via (\ref{1.21}).
Recall also that the Weyl--function $M_{tr}$ of the Sturm-Liouville system with the trivial
potential $u$ (i.e., $u$ equal to zero) equals $[i \sqrt{z}-1]/[i \sqrt{z}+1]I_m$.
Hence we shall require that
\begin{equation} \label{2.20}
\lim_{t\to \infty}M(t, z)= \frac{i \sqrt{z} - 1}{i \sqrt{z} + 1}I_m.
\end{equation}


\vspace*{0.25cm}

\begin{proposition} \label{prtca}
Assume that there exists a solution $u$
of the KdV equation (\ref{1.1})  on the quarter--plane
$0 \leq x<\infty$,
$0 \leq t <  \infty$,  which satisfies also the conditions of Proposition \ref{evol}
and the initial--boundary value conditions (\ref{2.11}). Suppose that (\ref{2.20})
holds. Then $u$ may be uniquely recovered by the following procedure:

First, the Weyl--function of the Dirac--type system (\ref{2.19})
is recovered for sufficiently large values of $\Im (\sqrt{z})$
by the formula
\begin{equation} \label{2.21}
M_D(-4 z^{\frac{3}{2}})=\frac{1}{\sqrt{z}}(I_m+M(0, z))(I_m-M(0, z))^{-1},
\end{equation}
where $z$ belongs to the sector $\frac{2}{3}\pi<\arg (z) <\pi$.
The matrix function $M(0,z)$ in (\ref{2.21}) is the Weyl--function of the canonical system (\ref{1.13}), (\ref{1.14}) at $t=0$,
which is
determined by the initial condition $u(x,0)=f(x)$.

Next, the matrix-function $u_x(0,t)$
is uniquely recovered from $M_D( z)$, after which $R(t, z)$ is given by (\ref{2.13}) and (\ref{2.14}).
The evolution of the Weyl--function $M(t, z)$ is given by (\ref{2.3}) in terms of $R$ and $M(0,z)$.

Finally, $u(x,t)$ is uniquely recovered from $M(t, z)$.
\end{proposition}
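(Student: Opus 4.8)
The plan is to read Proposition~\ref{prtca} as a reconstruction scheme and to check that each of its four steps is well defined and that their composition returns $u$ uniquely. Three of the four steps are immediate from material already in hand: the passage from $M_D$ to $u_x(0,t)$ is the (unique) solution of the inverse problem for the Dirac--type system \eqref{2.19}, quoted from \cite{SaA02,Sa99}; once $u_x(0,t)$ is known, $R(t,z)$ is the unique solution of the linear initial value problem \eqref{2.13}--\eqref{2.14}, and $M(t,z)$ is then produced by formula \eqref{2.3} of the already-proved Proposition~\ref{evol}; finally, for each fixed $t$ the function $M(t,z)$ is the Weyl--function of the Sturm--Liouville system \eqref{r1} with potential $u(\cdot,t)$, so the Borg--Marchenko-type uniqueness for \eqref{r1} recovers $u(\cdot,t)$. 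Thus the only genuinely new identity to establish is \eqref{2.21}.

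To derive \eqref{2.21} I would first reduce the canonical Weyl--function $M(0,z)$ to the classical Schr\"odinger $m$-function. Writing $\Phi=\mathrm{col}(\phi_1,\phi_2)$ in \eqref{1.2}--\eqref{1.3}, the relations $\phi_{1,x}=\phi_2$ and $\phi_{2,x}=(u-zI_m)\phi_1$ give $-\phi_{1,xx}+u\phi_1=z\phi_1$, so the top block $\phi_1$ solves \eqref{r1} and $\phi_2=\phi_{1,x}$. Hence the $L^2(dx)$ Weyl solution at $t=0$ has boundary vector (at $x=0$) proportional to $\mathrm{col}(I_m,\wh M(0,z))$, where, by the normalization \eqref{r5}, the Schr\"odinger $m$-function is $\wh M(0,z)=(I_m+M(0,z))(I_m-M(0,z))^{-1}$.

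The crucial point is then that this spatial boundary vector, propagated in $t$ by $F(0,t,z)$ (equivalently by the system \eqref{2.13} governing $R$), is square-integrable in $t$ on the sector $\tfrac{2}{3}\pi<\arg z<\pi$, and therefore singles out the decaying solution that defines $M_D$ through \eqref{1.21}. Integrability in $t$ is exactly where the hypotheses are used: the KdV compatibility ties the temporal evolution to the initial data, and the normalization \eqref{2.20} forces $u(\cdot,t)$ toward the trivial potential as $t\to\infty$, so that the propagated solution decays and the growing mode is excluded. It remains to carry $\mathrm{col}(I_m,\wh M(0,z))$ through the explicit changes \eqref{2.15}--\eqref{2.18}: the rescaling $\di\{I_m,\tfrac{1}{\sqrt z}I_m\}$ built into $Z(z)$ multiplies the lower block by $1/\sqrt z$ and the substitution $\zeta=-4z^{3/2}$ makes the coefficient $\zeta$-linear, which reproduces the factor $\tfrac{1}{\sqrt z}$ and yields $M_D(-4z^{3/2})=\tfrac{1}{\sqrt z}(I_m+M(0,z))(I_m-M(0,z))^{-1}$. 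As a check, $u\equiv 0$ gives $M(0,z)=M_{tr}$, hence $\wh M(0,z)=i\sqrt z$ and $M_D=iI_m$, which agrees with solving the free system \eqref{2.19} directly.

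The main obstacle is precisely this justification of \eqref{2.21}, i.e.\ proving that the $x$-decaying solution at $t=0$ really propagates into the $t$-decaying solution of \eqref{2.19} at $x=0$. This couples two Weyl problems in different variables and requires: controlling the image of the sector under $\zeta=-4z^{3/2}$ to ensure $\Im\zeta>0$ (so that $M_D$ is defined) together with the branch choice $\sqrt z\in\BC_+$; tracking the conjugations $z\mapsto\ov z$ and the passage to the dual transfer matrix $(V^*)^{-1}$ hidden in \eqref{2.1}, \eqref{2.12}; keeping the blocks in the correct (non-commuting) order throughout the linear-fractional manipulations; and verifying the $L^2(dt)$ convergence in \eqref{1.21}, for which \eqref{2.20} is essential. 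The remaining three reconstruction steps are then routine appeals to existence/uniqueness for linear ODEs and to the cited inverse-spectral theorems.
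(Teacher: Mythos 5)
Your overall architecture matches the paper's: the only genuinely new step is \eqref{2.21}, and the paper likewise disposes of the remaining three steps by citing the inverse problems for the Dirac--type and canonical systems and by invoking Proposition \ref{evol}. But at the crux you have a genuine gap, which you yourself flag as ``the main obstacle'' without closing it: you never actually verify that the candidate $M_D$ of \eqref{2.21} satisfies the defining inequality \eqref{1.21}. Your heuristic --- that \eqref{2.20} ``forces $u(\cdot,t)$ toward the trivial potential as $t\to\infty$, so that the propagated solution decays and the growing mode is excluded'' --- is both unproven and not how the mechanism works: \eqref{2.20} is a hypothesis on the Weyl function $M(t,z)$ only, convergence of Weyl functions does not yield convergence of potentials, and the paper never claims or needs any pointwise decay of $u(\cdot,t)$ or of the propagated solution.

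The actual proof has three concrete ingredients you are missing. First, an energy identity for the Dirac--type system \eqref{2.19}: the potential term $-\di\{u_x,u_x\}J$ drops out of $\frac{d}{dt}\big(\wt R^*\Sigma_3\wt R\big)=i(\zeta-\ov\zeta)\,\wt R^*\wt R$ (formula \eqref{2.22}), so for $\Im(\zeta)>\de/2$ one gets \eqref{2.23}, which converts the desired $L^2(dt)$ bound \eqref{2.31} into the purely algebraic statement that the boundary quadratic form $[I_m \ \ iM_D^*]K\wt R(t,\zeta)^*\Sigma_3\wt R(t,\zeta)K^*\mathrm{col}(I_m,\,-iM_D)$ is eventually nonnegative (inequality \eqref{2.24}). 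Second, to check that sign one needs the intertwining supplied by Proposition \ref{evol}: by \eqref{2.26}, $R(t,z)$ carries the candidate direction $\mathrm{col}(-iM(0,z),\,I_m)$ into the time-$t$ Weyl direction $\mathrm{col}(-iM(t,z),\,I_m)$ up to a right factor $\om(t,z)$ --- this is the precise sense in which the two Weyl problems ``in different variables'' are coupled, and it is a finished formula, not an obstacle. Third, \eqref{2.20} enters only at this point: substituting the limit $M(t,z)\to\frac{i\sqrt z-1}{i\sqrt z+1}I_m$ into the explicit form \eqref{2.28} gives the strictly positive limit $\frac{8|\sqrt z|^2}{|i\sqrt z+1|^2}\,\om^*\om$ in \eqref{2.29}, whence \eqref{2.24} holds for all large $t$ and \eqref{2.31} follows. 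Your reduction of $M(0,z)$ to the Schr\"odinger $m$-function, the $1/\sqrt z$ bookkeeping through $Z(z)$ and $\zeta=-4z^{3/2}$, and the $u\equiv 0$ sanity check are all consistent with the paper, but without the identity \eqref{2.22}, the intertwining \eqref{2.26}, and the limit computation \eqref{2.29}, the central claim that \eqref{2.21} produces \emph{the} Weyl function of \eqref{2.19} remains unestablished.
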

\begin{proof}.  
By (\ref{2.19}) we get
\begin{equation} \label{2.22}
\frac{d}{d t}\wt R(t, \zeta)^* \Sigma_3\wt R(t, \zeta)
=i(\zeta - \ov\zeta)\wt R(t, \zeta)^*\wt R(t, \zeta).
\end{equation}
Formula (\ref{2.22}) and the second relation in (\ref{2.19}) imply that
\[
\wt R(t, \zeta)^* \Sigma_3 \wt R(t, \zeta)- \Sigma_3 <-\de \int_0^t\wt R(s, \zeta)^*\wt R(s, \zeta)ds
\quad {\mathrm{for}} \, \Im (\zeta) > \de /2>0,
\]
or, equivalently, we have
\begin{equation} \label{2.23}
\Sigma_3 - \wt R(t, \zeta)^* \Sigma_3 \wt R(t, \zeta)>\de \int_0^t\wt R(s, \zeta)^*
\wt R(s, \zeta)ds \quad {\mathrm{for}} \, \Im (\zeta) > \de /2>0.
\end{equation}
Next, let us show that for sufficiently large values of $\Im (\sqrt{z})$ and $t$ the inequality
\begin{equation} \label{2.24}
[I_m \quad iM_D(\zeta)^*]K\wt R(t, \zeta)^* \Sigma_3 \wt R(t, \zeta)K^* \left[
\begin{array}{c}
I_m\\ -iM_D(\zeta)
\end{array}
\right]\geq 0,
\end{equation}
where $M_D$ is given by (\ref{2.21}), is valid.
First, take into account (\ref{1.22}) and (\ref{2.17}) and note that
\begin{equation} \label{2.25}
Z( z)K^*=-\frac{1}{\sqrt{2z}}\left[ \begin{array}{cc}
iI_m& \sqrt{z}I_m \\ I_m & i\sqrt{z}I_m
\end{array}
\right].
\end{equation}
Using (\ref{2.17}), (\ref{2.21}) and (\ref{2.25}) we write
\begin{equation} \label{2.25'}
\wt R(t, \zeta)K^* \left[
\begin{array}{c}
I_m\\ -iM_D(\zeta)
\end{array}
\right]=\frac{-2}{\sqrt{2z}}Z( z)^{-1}R(t, z)\left[
\begin{array}{c}
-iM(0, z)\\ I_m
\end{array}
\right](I_m-M(0, z))^{-1}.
\end{equation}
According to Proposition \ref{evol} we have
\begin{equation} \label{2.26}
R(t, z)\left[
\begin{array}{c}
-iM(0, z)\\ I_m
\end{array}
\right]=\left[
\begin{array}{c}
-iM(t, z)\\ I_m
\end{array}
\right]\Big((-i)r_{21}(t, z)M(0, z)+r_{22}(t, z)\Big).
\end{equation}
Taking into account that
\begin{equation} \label{2.27}
Z( z)^{-1}=T\di \{I_m, \,{ \sqrt{z}}I_m\}T, \quad T^* \Sigma_3 T=J_1,
\end{equation}
we obtain
\begin{equation} \label{2.28}
\big(Z( z)^{-1}\big)^* \Sigma_3 Z( z)^{-1}=\frac{1}{2} \left[
\begin{array}{cc}
i(\ov {\sqrt{z}}- \sqrt{z})I_m& (\ov {\sqrt{z}}+ \sqrt{z})I_m\\ ( \ov {\sqrt{z}}+ \sqrt{z})I_m & i( \sqrt{z}-\ov {\sqrt{z}})I_m
\end{array}
\right].
\end{equation}
From (\ref{2.25'}), (\ref{2.26}) and (\ref{2.28}) it follows that
\begin{align}
&[I_m \quad iM_D(\zeta)^*]K\wt R(t, \zeta)^* \Sigma_3 \wt R(t, \zeta)K^* \left[
\begin{array}{c}
I_m\\ -iM_D(\zeta)
\end{array}
\right] \nonumber
\\ \nonumber
& \quad =\om(t, z)^* [i M(t, z)^* \quad I_m]\left[
\begin{array}{cc}
i(\ov {\sqrt{z}}- \sqrt{z})I_m& (\ov {\sqrt{z}}+ \sqrt{z})I_m\\ ( \ov {\sqrt{z}}+ \sqrt{z})I_m & i( \sqrt{z}-\ov {\sqrt{z}})I_m
\end{array}
\right]
\\ \nonumber
& \qquad \times \left[
\begin{array}{c}
-iM(t, z)\\ I_m
\end{array}
\right]\om(t, z) \nonumber \\
& \quad \sim  \frac{8 |\sqrt{z}|^2}{|i\sqrt{z} + 1|^2} \om(t, z)^* \om( t, z) >0 \quad (t \to \infty),
\label{2.29}
\end{align}
where
\begin{equation} \label{2.30}
\om(t, z)=\frac{1}{\sqrt{z}}\Big((-i)r_{21}(t, z)M(0, z)+r_{22}(t, z)\Big)(I_m-M(0, z))^{-1}.
\end{equation}
We recall the choice $\frac{2}{3}\pi<\arg (z) <\pi$, that is, $\Im (\zeta) >0$.
By (\ref{2.20}) and (\ref{2.29}) for sufficiently large values of $t$ we get (\ref{2.24}).

Hence, it follows from (\ref{2.23}) and (\ref{2.24}) that the inequality
\begin{equation} \label{2.31}
\int_0^\infty [I_m \quad iM_D(\zeta)^*]K \wt R(s, \zeta)^*\wt R(s, \zeta)K^* \left[
\begin{array}{c}
I_m\\ -iM_D(\zeta)
\end{array}
\right]ds<\infty
\end{equation}
holds. Thus, $M_D$ is, indeed, the Weyl--function of the Dirac system.
The evolution $M(t, z)$ follows from Proposition \ref{evol}.
For the inverse problem for our canonical system, when $u$ is bounded, see
\cite{Sa99}, p. 116 and references.
\end{proof}

We provide a short {\bf SUMMARY} of the scheme employed:

\vspace{5pt}
\fbox{\begin{minipage}{290pt}
\begin{align*}
& f(x)=u(x,0) \xrightarrow{\text{by \eqref{1.2}, \eqref{1.4b}}} \Psi(x,0,0), \; x\geq 0,
\xrightarrow{\text{by \eqref{1.14}}} H(x,0), \; x\geq 0, \\
& \xrightarrow{\text{by \eqref{1.13}}} \wti \Psi(x,0,z), \; x\geq 0,
\xrightarrow{\text{by \eqref{1.16}, \eqref{1.19}}} M (0,z)
\xrightarrow{\text{by \eqref{2.21}}} M_D (\zeta) \\
& M_D(\zeta) \text{ and } \eqref{2.19}\xrightarrow{\text{by solving an IP}}
u_x(0,t), \; t\geq 0,
\xrightarrow{\text{by \eqref{2.13}}} R(t,z), \; t\geq 0, \\
& M(0,z) \text{ and } R(t,z), \; t\geq 0, \xrightarrow{\text{by \eqref{2.3}}} M(t,z), \; t\geq 0,
\\
& \xrightarrow{\text{by solving an IP}} u(x,t), \; x\geq 0, \, t\geq 0 \\
& \xrightarrow{\text{prove that $u$ solves}} KdV(u)=0, \; x\geq 0, \, t\geq 0.
\end{align*}
\end{minipage}}
\vspace{10pt}

Consider the simplest example.

\vspace*{0.25cm}

\begin{example} \label{Ee0}
Put for simplicity $m=1$, i.e., consider a scalar KdV equation.
The simplest case is the case $u(x,0)=f(x)=0$
(see the initial--boundary value conditions \eqref{2.11}).
The Weyl function $M(0,z)$ of the  Sturm--Liouville system with $u \equiv 0$
is given by the formula
\begin{equation} \label{i1}
M(0, z)= \frac{i \sqrt{z} - 1}{i \sqrt{z} + 1}.
\end{equation}
By \eqref{2.21} it follows that the Weyl function $M_D(\zeta)$ of the Dirac system \eqref{2.19} 
is given by the formula
\begin{equation} \label{i2}
M_D\big(\zeta(z)\big)=\frac{1}{\sqrt{z}}\frac{1+\frac{i \sqrt{z} - 1}{i \sqrt{z} + 1}}
{1-\frac{i \sqrt{z} - 1}{i \sqrt{z} + 1}}=i, \quad \zeta(z)=-4z^{\frac{3}{2}}.
\end{equation}
As $M_D \equiv i$ is the Weyl function of the Dirac system \eqref{2.19}   with a trivial potential
$u_x=0$ we get the fundamental solution
\begin{equation} \label{i3}
\wt R(t, \zeta)=\exp(it \zeta \S_3).
\end{equation}
Hence, taking into account  \eqref{2.17}  we derive
\begin{equation} \label{i4}
 R\big(t, \zeta(z)\big)=Z(z) \exp\big(it \zeta(z) \S_3\big)Z(z)^{-1}, \quad Z( z):=T^*\di \{I_m, \,\frac{1}{ \sqrt{z}}I_m\}T^*.
\end{equation}
Using (\ref{i4}) we can obtain $M(t,z)$. First, rewrite \eqref{2.3}  in the form
\begin{eqnarray} \label{i5}
 M(t,z)&=&i[1 \quad 0]R\big(t, \zeta(z)\big)\\
\nonumber
&&\times \left[ \begin{array}{c}
-iM(0,z)  \\ 1
\end{array}
\right]\left([0 \quad 1]R\big(t, \zeta(z)\big)\left[ \begin{array}{c}
-iM(0,z)  \\ 1
\end{array}
\right]\right)^{-1}.
 \end{eqnarray}
 Next, note that according to (\ref{i1}) and the second equality in (\ref{i4})  we have
 \begin{equation} \label{i6}
 Z(z)^{-1}\left[ \begin{array}{c}
-iM(0,z)  \\ 1
\end{array}
\right]=-
 \frac{2 \sqrt{z} }{i \sqrt{z} + 1}
\left[ \begin{array}{c}
1  \\ 0
\end{array}
\right].
\end{equation}
From  (\ref{i4})  and (\ref{i6}) it follows that
 \begin{equation} \label{i7}
 R\big(t, \zeta(z)\big)\left[ \begin{array}{c}
-iM(0,z)  \\ 1
\end{array}
\right]= \frac{e^{it\zeta(z)}}{i \sqrt{z} + 1}\left[ \begin{array}{c}
\sqrt{z}+i  \\ i\sqrt{z}+1
\end{array}
\right]=e^{it\zeta(z)} \left[ \begin{array}{c}
-iM(0,z)  \\ 1
\end{array}
\right].
\end{equation}
By (\ref{i5})  and (\ref{i7}) we have $M(t,z) \equiv M(0,z)$.
That is, $u(x,t) \equiv 0$.
\end{example}

\vspace*{0.5cm}
\setcounter{equation}{0}
\section{Non--existence of the global solutions in the quarter--plane} \label{non}
It proves that for wide classes of the initial conditions $f(x)$ the global
solutions satisfying conditions of  Proposition \ref{prtca} do not exist. Using 
small energy asymptotics of the corresponding Weyl--functions
we explicitly construct in this section such a class of initial conditions.

First, we describe the explicit construction of the potentials
and Weyl functions from Theorem 0.1 and Proposition 2.2 in \cite{GKS3}. 
For this purpose
we fix an integer $n>0$ and three matrices, namely, an $n \times n$
matrix $\a$ and $n \times m$ matrices $\vt_k$, $k=1,2$, such that
\begin{equation} \label{s2}
\alpha - \alpha^{*}= \vartheta_{1}\vartheta_{2}^{*}-\vartheta_{2}
\vartheta_{1}^{*}.
\end{equation}
The triple $\{\a, \, \vt_1, \, \vt_2\}$, which satisfies \eqref{s2}, is called
{\it admissible}.
Consider Sturm--Liouville system \eqref{r1}
where $u$ is determined by the triple $\{\a, \, \vt_1, \, \vt_2\}$. Namely, put
\begin{eqnarray} \label{s3}  
u(x)&=&2 \{ ( \Lambda_{2}(x)^{*}S(x)^{-1} \Lambda_{2}(x))^{2}
+ \Lambda_{1}(x)^{*}S(x)^{-1} \Lambda_{2}(x)
\\
&&
+\Lambda_{2}(x)^{*}S(x)^{-1}
\Lambda_{1}(x) \},
\end{eqnarray}
where
\begin{equation} \label{s3'}   \Lambda (x)=
\left [ \begin{array}{c} \Lambda_{1}(x)
\\
\Lambda_{2}(x) \end{array} \right ]=e^{x \beta }
\left [ \begin{array}{c} \vartheta_{1}
\\
\vartheta_{2} \end{array} \right ],
\hspace{1em}
\beta =
\left [ \begin{array}{lr}
0 & \alpha
\\
-I_{n} & 0
\end{array} \right ],
\end{equation}
\begin{equation} \label{s4}
S(x)=I_n+ \int_{0}^{x} \Lambda _{2}(y) \Lambda _{2}(y)^{*}dy,
\hspace{1em}x \geq 0 .
\end{equation}

\vspace*{0.25cm}

\begin{theorem}\label{EW}\cite{GKS3} Let $u$ be determined by the
admissible triple  $\{\a, \, \vt_1, \, \vt_2\}$ via formulas \eqref{s3}--\eqref{s4}
and let $Y$ satisfy \eqref{r1} and \eqref{r5}. Then,
for any sufficiently large values of $\Im{\sqrt{z}}$ $(z,\, \sqrt{z} \, \in \BC_+)$
we have
\begin{equation} \label{r6}
\int_0^\infty
\left[ \begin{array}{lr}
i \phi (\sqrt{z})^* & I_m  \end{array} \right]
Y(x,z)^*Y(x,z)
\left[ \begin{array}{c}
 -i \phi (\sqrt{z})\\ I_m \end{array} \right]
dx            < \infty, 
\end{equation}
where
\begin{equation}\label{s6}
\phi ( \sqrt{z})=\Big(\vp_{2}(z)+
\frac{2i}{\sqrt{z}}I_{m}\Big)\vp_{1}(z),
\end{equation}
and the matrix functions $\vp_{1}$ and $\vp_{2}$ are rational matrix functions
  given  by the realizations:
\begin{equation}\label{s7}
\vp_{1}(z)^{-1}=I_{m}+B^{*} \hat{J}(zI_{2n+m}-A)^{-1}B,
\hspace{1em}
\vp_{2}(z)=-I_{m}+C(zI_{2n+m}-A)^{-1}B,
\end{equation}
\begin{eqnarray} \label{s9}
&&A= \left[ \begin{array}{lcr} \alpha^{*} & 0 & 0 \\
\vartheta_{1}^{*} &  0 & 0 \\
\vartheta_{2}  \vartheta_{2}^{*} &  \vartheta_{1} &  \alpha \end{array} \right]
, \hspace{1em}B= \left[ \begin{array}{c}
  \vartheta_{1}+  \vartheta_{2}(I_{m}+  \vartheta_{2}^{*}  \vartheta_{2}
)
\\
I_{m}+  \vartheta_{2}^{*}  \vartheta_{2}
\\
- \vartheta_{2}
\end{array} \right]
, \\ \nonumber &&
\hat{J}= \left[ \begin{array}{lcr} 0 & 0 & I_{n} \\
0 &  I_{m} & 0 \\ I_{n} &  0 & 0 \end{array} \right]
, \quad
C= \left[ \begin{array}{lcr}
\vartheta_{2}^{*} & I_{m}-  \vartheta_{2}^{*}  \vartheta_{2}
& - \vartheta_{1}^{*}+( I_{m}-  \vartheta_{2}^{*}  \vartheta_{2})
\vartheta_{2}^{*}
\end{array} \right] .
\end{eqnarray}
Moreover, for any sufficiently large values of $\Im{\sqrt{z}}$ the matrix $\phi (\sqrt{z})$, such that
\eqref{r6} holds, is unique.
\end{theorem}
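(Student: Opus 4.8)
The statement is a constructive computation of the Weyl function of the Sturm--Liouville operator carrying the explicit potential \eqref{s3}, together with its uniqueness. The plan is to realize \eqref{s3}--\eqref{s4} as a B\"acklund--Darboux (GBDT) transform of the zero potential, to transport the known decaying solution of the free equation through that transformation, to read off its boundary data at $x=0$ so as to obtain \eqref{s6}, and finally to invoke the limit-point property for the uniqueness assertion.

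\emph{Explicit fundamental solution.} First I would record from \eqref{s3'}--\eqref{s4} that $\Lambda_x=\beta\Lambda$ and $S_x=\Lambda_2\Lambda_2^*$, so that $S(x)\geq I_n>0$; hence $S(x)^{-1}$ exists and is bounded and $u$ in \eqref{s3} is well defined. I would then introduce the Darboux (transfer) matrix $w_\Lambda(x,z)$ assembled from $\Lambda_1,\Lambda_2,S$ and $\beta$, and verify by direct differentiation --- using $S_x=\Lambda_2\Lambda_2^*$ and the off-diagonal form of $\beta$ --- that it intertwines the free equation $-Y^{(0)}_{xx}=zY^{(0)}$ with \eqref{r1}: applying $w_\Lambda(x,z)$ to a free solution produces a solution of \eqref{r1} carrying precisely the potential \eqref{s3}. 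Matching the normalization \eqref{r5} then fixes the remaining constant factors.

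\emph{Weyl solution and the formula \eqref{s6}.} For the free equation with $\sqrt{z}\in\BC_+$ the decaying solution is $e^{i\sqrt{z}x}$, and the associated free Weyl function is $(i\sqrt{z}-1)(i\sqrt{z}+1)^{-1}I_m$, cf. \eqref{2.20}. Transporting $e^{i\sqrt{z}x}$ through $w_\Lambda$ gives a solution $\widehat Y(x,z)$ of \eqref{r1} that differs from $e^{i\sqrt{z}x}$ only by a factor growing at most polynomially in $x$ (a consequence of $S(x)\geq I_n$); for $\Im\sqrt{z}$ large the exponential decay then yields $\int_0^\infty\widehat Y^*\widehat Y\,dx<\infty$, which is exactly \eqref{r6}. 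Evaluating $\widehat Y$ and $\widehat Y_x$ at $x=0$ and comparing with \eqref{r5} produces the boundary column $\begin{bmatrix}-i\phi(\sqrt{z})\\ I_m\end{bmatrix}$. The decisive step is to recognize the $x=0$ boundary data in state-space form: rewriting the values of the $\Lambda,S$-blocks as transfer functions should reproduce exactly the realizations $\vp_1,\vp_2$ of \eqref{s7} with $A,B,C,\widehat J$ as in \eqref{s9}, after which $\phi=(\vp_2+\tfrac{2i}{\sqrt{z}}I_m)\vp_1$. I expect this algebraic bookkeeping --- matching the transformation data to the precise realization \eqref{s9} --- to be the main obstacle, since it is where the specific block structure of $A,B,C$ must be \emph{produced} rather than merely verified.

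\emph{Uniqueness.} Since $u$ is bounded, the half-line problem is in the limit-point case, so for each admissible $z$ the space of square-integrable solutions has $\BC^{2m}$-dimension $m$; normalizing the lower block of the boundary column to $I_m$ as in \eqref{r6} then determines $\phi$ uniquely. Equivalently, comparing \eqref{r6} with the Weyl characterization \eqref{r4} shows that $\begin{bmatrix}-i\phi\\ I_m\end{bmatrix}$ and $\begin{bmatrix}I_m\\ -iM\end{bmatrix}$ span the same $L^2$-subspace, whence $\phi=-M^{-1}$ and the uniqueness of $\phi$ reduces to the classical uniqueness of the Weyl function $M$.
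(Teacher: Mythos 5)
The paper itself contains no proof of this theorem: it is imported verbatim from \cite{GKS3} (Theorem 0.1 and Proposition 2.2 there), so your proposal can only be measured against the method of that source --- which, in outline, it correctly reproduces: realize \eqref{s3}--\eqref{s4} as a B\"acklund--Darboux (GBDT) transform of the zero potential, transport the decaying free solution $e^{i\sqrt{z}\,x}$ through the transfer matrix, read off the $x=0$ data in state-space form, and settle uniqueness at the end. The skeleton is right; two of your steps, however, have genuine defects.

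First, the growth estimate is wrong as stated. From $S(x)\geq I_n$ you get $\|S(x)^{-1}\|\leq 1$, but no control whatsoever of $\Lam(x)=e^{x\beta}\vartheta$: since $\beta^2=-\di\{\a,\,\a\}$, the eigenvalues of $\beta$ are square roots of eigenvalues of $-\a$ and are real, for instance, whenever the self-adjoint $\a$ has a negative eigenvalue (the self-adjoint case is exactly the one used later, in Propositions \ref{Pn43} and \ref{glne2}). So the Darboux factor applied to $e^{i\sqrt{z}\,x}$ is a priori exponentially, not polynomially, growing; ``a consequence of $S(x)\geq I_n$'' does not support your polynomial claim. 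The conclusion survives because the exponential rate is determined by the spectrum of $\beta$ and is independent of $z$, so \eqref{r6} follows once $\Im\sqrt{z}$ exceeds that rate --- and this is precisely why the theorem is stated only for sufficiently large $\Im\sqrt{z}$. Had polynomial growth been true, you would have proved \eqref{r6} for every $z\in\BC_+$, which is more than is claimed and should have been a warning sign.

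Second, you defer exactly the content of the theorem. The statement is not merely that some $\phi$ with property \eqref{r6} exists; it is the explicit realization \eqref{s6}--\eqref{s9}, including the specific matrices $A$, $B$, $C$, $\hat J$, the $I_m+\vt_2^*\vt_2$ corrections in $B$ and $C$, and the shift $\frac{2i}{\sqrt{z}}I_m$ in \eqref{s6}, all of which arise from rotating the $x=0$ data of the transformed Weyl solution into the normalization \eqref{r5}. Labelling this ``algebraic bookkeeping'' and ``the main obstacle'' without performing it means the stated formulas are never verified, so the proposal does not prove the theorem. A smaller point concerns uniqueness: your limit-point argument presupposes that $u$ of the form \eqref{s3} is bounded, which is itself a nontrivial fact --- the paper quotes it separately as \eqref{s11} (Proposition 2.3 of \cite{GKS3}) only after the theorem, and obtains the uniqueness assertion for large $\Im\sqrt{z}$ from a direct inequality ((2.61) in \cite{GKS3}) that does not need boundedness. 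Your route to uniqueness (and the identification $\phi=-M^{-1}$ matching \eqref{r4} with \eqref{r6}, i.e., \eqref{s10}) is legitimate once \eqref{s11} is supplied, but as written it assumes that input silently.
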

According to Theorem \ref{EW} inequality \eqref{r4} holds for
\begin{equation} \label{s10}
M(z)=-\phi (\sqrt{z})^{-1}
\end{equation}
and sufficiently large values of $\Im{\sqrt{z}}$.  From inequality (2.61) in \cite{GKS3} follows also
that matrices $M(z)$ satisfying \eqref{r4} for sufficiently large values of $\Im{\sqrt{z}}$ are unique.

By Proposition  2.3 in \cite{GKS3} we have
\begin{equation} \label{s11}
\sup_{0 \leq x<\infty} \| u(x) \|<\infty.
\end{equation}
Thus, there is a unique Weyl function $M(z)$ of system
\eqref{r1} and this Weyl function satisfies \eqref{r4}.
Therefore, equality \eqref{s10} defines the  Weyl function $M(z)$
for sufficiently large values of $\Im{\sqrt{z}}$. Note also that the Weyl function $M$ and the matrix function
$\phi (\sqrt{z})^{-1}$
are meromorphic.

\vspace*{0.25cm}

\begin{corollary}\label{CyWF}
The Weyl function $M$ of system \eqref{r1}, where $u$ has the form
 \eqref{s3}, is given by formulas  \eqref{s6}--\eqref{s10} for all $z \in \BC_+$ excluding a finite number of points.
\end{corollary}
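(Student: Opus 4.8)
The plan is to promote the identity \eqref{s10}, which Theorem \ref{EW} secures only for sufficiently large $\Im\sqrt{z}$, to all of $\BC_+$ by meromorphic continuation, the only extra work being the verification that the exceptional set is finite rather than merely discrete. First I would pass to the variable $w=\sqrt{z}$, choosing as before the branch with $w\in\BC_+$; since $0\notin\BC_+$ this branch is single-valued and holomorphic on $\BC_+$, and it maps $\BC_+$ biholomorphically onto the open first quadrant. In this variable the scalar factor $\frac{2i}{\sqrt{z}}I_m=\frac{2i}{w}I_m$ appearing in \eqref{s6} is rational, and by the realizations \eqref{s7}, \eqref{s9} the functions $\vp_1(z)=\vp_1(w^2)$ and $\vp_2(z)=\vp_2(w^2)$ are rational matrix functions of $w$ (they are rational in $z$, the state space $\BC^{2n+m}$ being finite dimensional). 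Hence $\phi(\sqrt{z})$ is a rational matrix function of $w$.

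Next I would record that $\phi(\sqrt{z})$ is not identically singular: on the open set where $\Im\sqrt{z}$ is large, \eqref{s10} holds with $M$ a genuine Herglotz Weyl function, so $\det\phi(\sqrt{z})\not\equiv 0$. For a rational matrix function with non-vanishing determinant the inverse $\phi(\sqrt{z})^{-1}$ is again rational in $w$, hence has only finitely many poles; together with the finitely many poles of $\vp_1,\vp_2$ (the points at which $\phi$ itself is undefined) these constitute, under the correspondence $w=\sqrt{z}$, a finite set $\cE\subset\BC_+$.

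Finally, both $M(z)$ and $-\phi(\sqrt{z})^{-1}$ are meromorphic on the connected domain $\BC_+$, as noted just before the statement, and they coincide on the nonempty open subset on which $\Im\sqrt{z}$ is sufficiently large. By the identity theorem for meromorphic functions on a connected domain, the two agree throughout $\BC_+\setminus\cE$, which is precisely the assertion; at the finitely many points of $\cE$ the right-hand side of \eqref{s10} may have poles and need not represent $M$.

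The hard part is the finiteness count of the second step: a priori the continuation only yields agreement off a discrete set, and it is the rational dependence on $w=\sqrt{z}$ — rather than mere meromorphy in $z$ — that forces the exceptional set to be finite. The one routine point to check is that $\{z\in\BC_+:\Im\sqrt{z}\text{ sufficiently large}\}$ is indeed open and nonempty, which is clear since its image under $w=\sqrt{z}$ is an open subset of the first quadrant.
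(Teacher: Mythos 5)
Your proposal is correct and takes essentially the same route as the paper, which likewise derives the corollary from the validity of \eqref{s10} for sufficiently large $\Im\sqrt{z}$ (secured via uniqueness of the Weyl function, using the boundedness \eqref{s11}) together with the rationality of $\vp_1,\vp_2$ and the meromorphy of $M$ and $\phi(\sqrt{z})^{-1}$, the finiteness of the exceptional set coming from the rational dependence on $\sqrt{z}$. Your explicit change of variable $w=\sqrt{z}$ and the check that $\det\phi\not\equiv 0$ simply spell out steps the paper leaves implicit.
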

Relation \eqref{s3} can be rewritten as
\begin{equation} \label{s12}
u=2( \Om_{22}^2+ \Om_{12}+ \Om_{21}); \quad
\Om_{kj}:=\Lam_k^*S^{-1}\Lam_j, \quad k,j=1,2.
\end{equation}
The derivatives of  $u$ are calculated in \cite{GKS3} using \eqref{s3}--\eqref{s4}.
In particular,  from the expressions (5.16) and (5.17) in \cite{GKS3} for the derivatives of
$\Om_{kj}$ one can get
\begin{equation} \label{s12'}
u_x=2( \Lam_2^*\a^*S^{-1}\Lam_2+\Lam_2^*S^{-1}\a \Lam_2-2\Om_{11}
- \Om_{12}\Om_{22}- \Om_{22}\Om_{21})-
u\Om_{22}-\Om_{22}u.
\end{equation}
Formula (5.31) in \cite{GKS3} has the form
 \begin{align}\label{s13}
 3u^{2}- \frac{ \partial^{2} u }{ \partial x^{2}} =&
8(\Om_{21}\Om_{12}+
 \Lambda_{2}^{*}S^{-1} \alpha \Lambda_{2} \Om_{22}+\Om_{22} \Lambda_{2}^{*}
  \alpha^{*}S^{-1} \Lambda_{2} \\& \nonumber+
\Lambda_{2}^{*}S^{-1} \alpha \Lambda_{1}+
\Lambda_{1}^{*}
  \alpha^{*}S^{-1} \Lambda_{2}).
\end{align}
Formula (5.37) in \cite{GKS3} after some cancellations takes the form
\begin{align}
& \nonumber \frac{ \partial  }{ \partial x}(3u^{2}-
\frac{ \partial^{2} u }{ \partial x^{2}})=
8 \{
\Lambda_{2}^{*}
(  \alpha^{*})^{2}S^{-1} \Lambda_{2}-
\Lambda_{1}^{*}
 \alpha^{*}S^{-1} \Lambda_{1}-\Lambda_{1}^{*}
 \alpha^{*}S^{-1} \Lambda_{2}\Om_{22} \\
& \nonumber 
 +
\Lambda_{2}^{*}S^{-1} \alpha ^{2} \Lambda_{2} -
\Lambda_{1}^{*}
  S^{-1} \alpha \Lambda_{1}-\Om_{22} \Lambda_{2}^{*}
  S^{-1} \alpha \Lambda_{1}
- \Lambda_{2}^{*}S^{-1} \alpha  \Lambda_{2}
( \Om_{22}^{2}+\Om_{21} )\\& \nonumber
-( \Lambda_{2}^{*}S^{-1} \alpha  \Lambda_{1}
+ \Lambda_{1}^{*}S^{-1} \alpha  \Lambda_{2}
+\Om_{22} \Lambda_{2}^{*}S^{-1} \alpha  \Lambda_{2})\Om_{22}-
( \Om_{22}^{2}+\Om_{12})
\Lambda_{2}^{*}
 \alpha^{*}S^{-1} \Lambda_{2}\\& \nonumber
 - \Om_{22} ( \Lambda_{2}^{*}
 \alpha^{*}S^{-1} \Lambda_{1}+ \Lambda_{1}^{*}
 \alpha^{*}S^{-1} \Lambda_{2}+
\Lambda_{2}^{*}
 \alpha^{*}S^{-1} \Lambda_{2} \Om_{22})
-(\Om_{22}\Om_{21}+\Om_{11})\Om_{12}
\\&\label{s14}
-\Om_{21}(
\Om_{12}\Om_{22}+\Om_{11}) \}.
\end{align}
Our next proposition follows from \eqref{s12}--\eqref{s14}.

\vspace*{0.25cm}

\begin{proposition} \label{Pn43}
Let 
\begin{equation} \label{s15}
\a=\a^*, \quad \vt_1^*\a\vt_1=0, \quad \vt_2=0.
\end{equation}
Then the triple
$\{\a, \, \vt_1, \, \vt_2\}$ is admissible and
\begin{equation} \label{s16}
u(0)=u_{xx}(0)=u_{xxx}(0)=0, \quad u_x(0)=-4\vt_1^*\vt_1.
\end{equation}
\end{proposition}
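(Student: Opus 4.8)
The plan is to push the hypotheses \eqref{s15} through the already-established closed forms \eqref{s12}--\eqref{s14} for $u,\,u_x,\,u_{xx}$ and $\frac{\p}{\p x}(3u^2-u_{xx})$, evaluating each at $x=0$; the whole point is that \eqref{s15} forces almost every summand in those formulas to vanish at $x=0$. Admissibility comes first and is immediate: under \eqref{s15} we have $\a=\a^*$ and $\vt_2=0$, so the left-hand side of \eqref{s2} is $\a-\a^*=0$ and the right-hand side is $\vt_1\vt_2^*-\vt_2\vt_1^*=0$, whence \eqref{s2} holds.

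Next I would record the building blocks at $x=0$. Because $\vt_2=0$ and $e^{0\cdot\b}=I_{2n}$, formula \eqref{s3'} gives $\Lam_1(0)=\vt_1$ and $\Lam_2(0)=0$, while \eqref{s4} gives $S(0)=I_n$. Feeding these into $\Om_{kj}=\Lam_k^*S^{-1}\Lam_j$ (cf. \eqref{s12}) yields $\Om_{11}(0)=\vt_1^*\vt_1$ and $\Om_{12}(0)=\Om_{21}(0)=\Om_{22}(0)=0$. The first three claims then drop out by inspection: every summand of \eqref{s12} carries one of $\Om_{22},\Om_{12},\Om_{21}$, so $u(0)=0$; in \eqref{s12'} every term except $-2\Om_{11}$ contains a factor $\Lam_2(0)=0$, a vanishing block, or $u(0)=0$, leaving $u_x(0)=2\cdot(-2\Om_{11}(0))=-4\vt_1^*\vt_1$; and in \eqref{s13} one has $3u(0)^2=0$ while every summand on the right carries a factor $\Lam_2$ or $\Lam_2^*$, so $u_{xx}(0)=0$. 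Notably, none of these three uses the middle hypothesis $\vt_1^*\a\vt_1=0$.

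The decisive step, and the only place $\vt_1^*\a\vt_1=0$ is consumed, is $u_{xxx}(0)=0$, which I would extract from \eqref{s14}. Since $u(0)=0$, differentiating gives $\frac{\p}{\p x}(3u^2)\big|_{x=0}=3\big(u_x(0)u(0)+u(0)u_x(0)\big)=0$, so that $\frac{\p}{\p x}\big(3u^2-u_{xx}\big)\big|_{x=0}=-u_{xxx}(0)$. Evaluating the long right-hand side of \eqref{s14} at $x=0$, every summand containing a factor $\Lam_2$ or $\Lam_2^*$ dies because $\Lam_2(0)=0$, and every summand carrying $\Om_{12},\Om_{21}$ or $\Om_{22}$ dies as well; the only two survivors are $-\Lam_1^*\a^*S^{-1}\Lam_1$ and $-\Lam_1^*S^{-1}\a\Lam_1$, each equal to $-\vt_1^*\a\vt_1$ by $S(0)=I_n$ and $\a=\a^*$. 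The hypothesis $\vt_1^*\a\vt_1=0$ then kills these too, giving $u_{xxx}(0)=0$. The only genuine labor here is the bookkeeping in \eqref{s14}—tracking which of its dozen-odd summands survive at $x=0$ and recognizing that the survivors are exactly the terms governed by $\vt_1^*\a\vt_1$; everything else is forced by $\Lam_2(0)=0$ and the vanishing of $\Om_{12},\Om_{21},\Om_{22}$.
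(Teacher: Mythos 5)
Your proposal is correct and follows essentially the same route as the paper's proof: verify admissibility directly from \eqref{s2}, compute $\Lam_1(0)=\vt_1$, $\Lam_2(0)=0$, $S(0)=I_n$, hence $\Om_{12}(0)=\Om_{21}(0)=\Om_{22}(0)=0$, and then read off the four equalities in \eqref{s16} from \eqref{s12}, \eqref{s12'}, \eqref{s13} and \eqref{s14}, with $\vt_1^*\a\vt_1=0$ entering only at the last step (your two surviving terms reproduce the paper's $u_{xxx}(0)=8\vt_1^*(\a+\a^*)\vt_1$ in \eqref{s18}). Your explicit remark that $\frac{\p}{\p x}(3u^2)\big|_{x=0}=0$ because $u(0)=0$ merely spells out a step the paper leaves implicit.
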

\begin{proof}.
As $\a=\a^*$ and $\vt_2=0$ the identity \eqref{s2} holds, that is,
the triple
$\{\a, \, \vt_1, \, \vt_2\}$ is admissible. According to \eqref{s3'} and \eqref{s15}
we have $\Lam_2(0)=\vt_2=0$.
As $\Lam_2(0)=0$ we have also $\Om_{21}(0)=
\Om_{12}(0)=\Om_{22}(0)=0$,
 and so formula \eqref{s12} implies $u(0)=0$. 
 Taking into account that
\begin{equation} \label{s17}
u(0)=0, \quad \Lam_2(0)=0, \quad \Om_{21}(0)=
\Om_{12}(0)=\Om_{22}(0)=0, \quad S(0)=I_n,
\end{equation}
we derive from  formulas  \eqref{s3'}  and  \eqref{s12'}   the equality
$u_x(0)=-4\vt_1^*\vt_1$.
Moreover, formulas  \eqref{s13} and  \eqref{s17} yield $u_{xx}(0)=0$.
By  \eqref{s14} and  \eqref{s17} we have 
\begin{equation} \label{s18}
u_{xxx}(0)=8\Lam_1(0)^*\Big(\a^*S(0)^{-1}+S(0)^{-1}\a\Big)\Lam_1(0)=8\vt_1^*(\a+\a^*)\vt_1.
\end{equation}
Finally, in view of  \eqref{s15} and \eqref{s18} we get $u_{xxx}(0)=0$.
\end{proof}
 
The first three equalities in \eqref{s16} mean that the initial condition $u(x,0)=u(x)$ for KdV  
complies with the boundary conditions $u(0,t)=u_{xx}(0,t)=0$.

\vspace*{0.25cm}

\begin{example} Consider the case
\begin{equation} \label{s19}
\a=0, \quad \vt_2=0.
\end{equation}
It is immediate that  \eqref{s15} holds, that is, the conditions
of Proposition \ref{Pn43} are fulfilled. It easily follows from
\eqref{s3'}, \eqref{s4}, and \eqref{s19} that
\begin{equation} \label{s20}
e^{x\b}=I_{2n}+x\b, \quad \Lam_1\equiv \vt_1, \quad \Lam_2(x)=-x \vt_1, \quad
S(x)=I_n+\frac{1}{3}x^3\vt_1\vt_1^*.
\end{equation}
Taking into account \eqref{s20}, we derive from \eqref{s3} that
\begin{equation} \label{s21}
u(x)=2x^4\vt_1^*\big(I_n+\frac{1}{3}x^3c\big)^{-1}c\big(I_n
+\frac{1}{3}x^3c\big)^{-1}\vt_1-4x\vt_1^*\big(I_n+\frac{1}{3}x^3c\big)^{-1}\vt_1, 
\end{equation}
where $c:=\vt_1\vt_1^*$. The Weyl function of system  \eqref{r1}, where $u$ is given
by \eqref{s21}, is constructed using \eqref{s6}--\eqref{s10} and \eqref{s19}. First note that
\begin{equation} \label{s22}
(zI_{2n+m}-A)^{-1}=\left[ \begin{array}{lcr} z^{-1}I_n & 0 & 0 \\
z^{-2}\vartheta_{1}^{*} &  z^{-1}I_m & 0 \\
z^{-3}c& z^{-2} \vartheta_{1} &  z^{-1}I_n \end{array} \right].
\end{equation}
Hence, we obtain
\begin{align} \label{s23}
&\vp_1(z)^{-1}=I_m+z^{-1}I_m+2z^{-2}\wh c+z^{-3}\wh c^2, \quad \wh c:=\vt_1^*\vt_1, \\
& \label{s24} \vp_2(z)=-I_m+z^{-1}I_m-z^{-3}\wh c^2.
\end{align}
Substitute \eqref{s23} and \eqref{s24} into \eqref{s6},  and substitute the result  into \eqref{s10}
to get  
\begin{equation} \label{s25}
M(z)=\sqrt{z}\Big(z^3 I_m +\big(zI_m+\wh c\big)^2\Big)
\Big(\sqrt{z}\big(z^3 I_m -z^2I_m+\wh c^2\big)-2iz^3I_m\Big)^{-1}.
\end{equation}
We have $\wh c=\vt_1^*\vt_1\geq 0$. Assume for simplicity
$\wh c>0$. Then, according to \eqref{s25} the low energy asymptotics
of $M$ is given by the formula
\begin{equation} \label{s26}
M(z)=I_m+2z\wh c^{-1}+O(z^2) \quad (z \to 0).
\end{equation}
\end{example}
Though equalities \eqref{s16} for $u(x,0)=u(x)$  comply with the boundary
conditions $u(0,t)=u_{xx}(0,t)=0$ the following non--existence proposition is true. 

\vspace*{0.25cm}

\begin{proposition}\label{glne1} 
There is no solution $u$ of the KdV equation
with a negative dispersion term in the quarter--plane $x\geq 0, t\geq 0$,
such that $u(x,t)$ satisfies conditions of Proposition \ref{prtca}, where the initial
condition  in \eqref{2.11} is determined by the admissible
triple $\{0,\,\vt_1, \,0\}$  $\,(\wh c=\vt_1^*\vt_1>0)$, namely, $u(x,0)$   has the form:
\begin{equation} \label{s27}
u(x,0)=2x^4\vt_1^*\big(I_n+\frac{1}{3}x^3c\big)^{-1}
c\big(I_n+\frac{1}{3}x^3c\big)^{-1}\vt_1-4x\vt_1^*\big(I_n+\frac{1}{3}x^3c\big)^{-1}\vt_1.
\end{equation}
\end{proposition}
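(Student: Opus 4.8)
The plan is to argue by contradiction, exploiting the low energy asymptotics \eqref{s26} together with the fact that, under the hypotheses of Proposition \ref{prtca}, the function $M_D$ produced by \eqref{2.21} must be a genuine Weyl function of the Dirac--type system \eqref{2.19}. Suppose such a solution $u$ existed. The initial condition \eqref{s27} is exactly the potential of the Example with $\a=0$, $\vt_2=0$, so the associated $M(0,z)$ is the Weyl function \eqref{s25}, whose low energy expansion is \eqref{s26}, namely $M(0,z)=I_m+2z\wh c^{-1}+O(z^2)$ as $z\to0$, where $\wh c=\vt_1^*\vt_1>0$.

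First I would record the property of $M_D$ that is to be violated: as the Weyl function of the Dirac system \eqref{2.19}, $M_D$ is a Herglotz (Nevanlinna) function, so $\Im M_D(\zeta)\geq 0$ throughout $\Im\zeta>0$ (consistent with $M_D\equiv i$ in the trivial Example \ref{Ee0}); equivalently, a pole of $M_D$ at a real point must carry a negative semidefinite residue. Next I would substitute \eqref{s26} into \eqref{2.21} and extract the leading singular term as $z\to0$ in the sector $\tfrac23\pi<\arg z<\pi$, so that $\zeta=-4z^{3/2}\to0$ with $\Im\zeta>0$. Since $I_m-M(0,z)=-2z\wh c^{-1}+O(z^2)$, one gets $(I_m-M(0,z))^{-1}=-\tfrac{1}{2z}\wh c+O(1)$, and multiplying by $I_m+M(0,z)=2I_m+O(z)$ yields $(I_m+M(0,z))(I_m-M(0,z))^{-1}=-\tfrac1z\wh c+O(1)$. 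Hence, using $z^{3/2}=-\zeta/4$,
\[
M_D(\zeta)=\frac{1}{\sqrt z}\Big(-\frac1z\wh c+O(1)\Big)=-z^{-3/2}\wh c+O\big(z^{-1/2}\big)=\frac{4}{\zeta}\,\wh c+o\!\left(\frac1\zeta\right),\qquad \zeta\to0 .
\]

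The contradiction then follows at once. The residue of $M_D$ at the real point $\zeta=0$ equals $4\wh c$, which is \emph{positive} definite since $\wh c>0$, whereas a Herglotz function may only have a negative semidefinite residue at a real pole. Equivalently, $\Im M_D(\zeta)\sim 4\wh c\,\Im(1/\zeta)=-4\wh c\,\Im\zeta/|\zeta|^2<0$ for $\zeta$ near $0$ in the upper half--plane, contradicting $\Im M_D\geq0$. Thus $M_D$ given by \eqref{2.21} cannot be the Weyl function of \eqref{2.19}, which contradicts Proposition \ref{prtca}; hence no solution with the stated properties exists.

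The main obstacle I anticipate is the bookkeeping in the asymptotic step: one must expand $M(0,z)$ to the order that controls the coefficient of $z^{-3/2}$ in $M_D$ and verify that the genuinely singular contribution is precisely $4\wh c/\zeta$ with positive definite coefficient, the subleading $O(z^{-1/2})=O(\zeta^{-1/3})$ terms being $o(\zeta^{-1})$ as $\zeta\to0$. A secondary point to state carefully is exactly which property of a Dirac--system Weyl function is invoked, namely its Herglotz character and the resulting sign constraint on the residue at a real point, and that this property is forced by the hypotheses of Proposition \ref{prtca} throughout $\BC_+$ by meromorphic continuation, so that examining the regime $\zeta\to0$ is legitimate.
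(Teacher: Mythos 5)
Your proposal is correct and follows essentially the same route as the paper's own proof: assume existence, invoke Proposition \ref{prtca} to identify $M_D$ via \eqref{2.21} with $M(0,z)$ given by \eqref{s25}, extend \eqref{s28} to the sector \eqref{s29} by meromorphy, and substitute the low energy expansion \eqref{s26} to obtain exactly the paper's asymptotics \eqref{s30}, $M_D(-4z^{3/2})=-z^{-3/2}\big(I_m+O(z)\big)\wh c$, contradicting the Herglotz property. Your only additions --- rewriting the singularity as $4\wh c/\zeta$ with $\Im M_D\sim-4\wh c\,\Im\zeta/|\zeta|^2<0$ and noting the sign constraint on residues at real poles --- merely make explicit the contradiction the paper states in one line.
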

\begin{proof}.  
 We prove this proposition by contradiction. Suppose that $u(x,t)$ described
in the proposition exists.  Then $M(0,z)=M(z)$, where $M$ is given by \eqref{s25}.
Hence, by Proposition \ref{prtca} the Weyl function of system \eqref{2.19} is given
by the formula 
\begin{equation} \label{s28}
M_D(-4 z^{\frac{3}{2}})=\frac{1}{\sqrt{z}}(I_m+M(z))(I_m-M(z))^{-1}
\end{equation}
for sufficiently large values of  $\Im \sqrt{z}$, where $z$ belongs to the sector  $\frac{2}{3}\pi<\arg (z) <\pi$.
Recall that as a Wel function $M_D(\zeta)$  is a Herglotz function  ($\zeta \in \BC_+$)
and that $M(z)$ is meromorphic in $\BC$.  Note that
\begin{equation} \label{s29}
\frac{2}{3}\pi<\arg (z) <\frac{4}{3} \pi
\end{equation}
implies $-4 z^{\frac{3}{2}} \in \BC_+$. Therefore \eqref{s28} holds in the sector \eqref{s29}.
The asymptotics  \eqref{s26} holds in $\BC$ and, in particular, in the sector \eqref{s29} too.
Moreover, according to \eqref{s26} and \eqref{s28} the low energy asymptotics of $M_D$ 
has the form
\begin{equation} \label{s30}
M_D(-4 z^{\frac{3}{2}})=-z^{-\frac{3}{2}}\big(I_m+O(z)\big)\wh c, \quad z \to 0,
\end{equation}
which contradicts the Herglotz property of $M_D$.
\end{proof}
Put
\begin{align} \label{b1}
& \Lambda (x,t)=
\left [ \begin{array}{c} \Lambda_{1}(x,t)
\\
\Lambda_{2}(x,t) \end{array} \right ]=e^{x \beta +4t \beta^{3} }
\left [ \begin{array}{c} \vartheta_{1}
\\
\vartheta_{2} \end{array} \right ],
\hspace{1em}
\beta =
\left [ \begin{array}{lr}
0 & \alpha
\\
-I_{n} & 0
\end{array} \right ], \\
&  \label{b2}
S(x,t)=I_n+P_{1}[0 \hspace{1em}e^{x \beta +4t \beta^{3} }]e^{x \omega
+4t \omega^{3} }
\left [ \begin{array}{c} P_{1}^{*}
\\
0 \end{array} \right ],
\hspace{1em} \omega = \left [ \begin{array}{lr} \beta^{*} & 0
\\
b & - \beta \end{array} \right ],
\end{align}
where $\{\a, \, \vt_1, \, \vt_2\}$ is an
admissible triple,
$P_{1}=[0 \hspace{1em}I_{n}]\,$,
$b=
\left [ \begin{array}{c} \vartheta_{1}
\\
\vartheta_{2} \end{array} \right ]
[  \vartheta_{1}^{*} \hspace{1em} \vartheta_{2}^{*}]$.
Then, according to Theorem 0.5 in \cite{GKS3} the matrix function $u(x,t)$,
given by \eqref{s12}  in the points of invertibility of $S$, satisfies KdV \eqref{1.1}.
Notice that   $\Lam(x,0)$ and $S(x,0)$ defined above coincide with
$\Lam(x)$ and $S(x)$ in \eqref{s3'} and \eqref{s4}, respectively.
Moreover, according to Chapter 5 in \cite{GKS3}  equalities
\eqref{s12'}--\eqref{s14}  hold for each $t$.
Finally, from (5.6) and (5.9) in \cite{GKS3} we have
\begin{equation} \label{b3}
S_x=\Lam_2\Lam_2^*, \quad S_t=-4\big(\a \Lam_2\Lam_2^*+ \Lam_2\Lam_2^*\a^*
+\Lam_1\Lam_1^*\big).
\end{equation}
(We changed $\Lam(x,t)$ into $\Lam(x,-t)$, $S(x,t)$ into $S(x,-t)$, and $u(x,t)$
into $u(x,-t)$ in the expressions in \cite{GKS3} to obtain  KdV solutions with a
negative dispersion term.) 

\vspace*{0.25cm}

\begin{example}\label{BU} Blow-up solutions. \\
Consider again the case \eqref{s19} of the triple $\{0,\,\vt_1,\,0\}$, where $\vt_1\not=0$.  By \eqref{s19}   we see that
$\b^2=\b^3=0$. As $\b^3=0$ formulas \eqref{s19}  and   \eqref{b1} imply
\begin{equation} \label{b4}
\Lam_1(x,t)\equiv \vt_1, \quad \Lam_2(x,t)=-x\vt_1
\end{equation}
(compare with \eqref{s20}). In particular, we get
$\Lam_2(0,t)\equiv 0$. Hence, in view of \eqref{s12} and \eqref{s13}
we derive
\begin{equation} \label{b5}
u(0,t)=u_{xx}(0,t)=0 \end{equation}
in the points of invertibility of $S(0,t)$. 
It follows from  \eqref{b3},  \eqref{b4}, and equality $\a=0$ that
\begin{equation} \label{b6}
S(x,t)=I_n+\big(\frac{1}{3}x^3-4t\big)c, \quad c=\vt_1\vt_1^*.
\end{equation}
Substitute \eqref{b4} and \eqref{b6} into \eqref{s12} to get
\begin{align}\nonumber
u(x,t)=&2x^4\vt_1^*\Big(I_n+\big(\frac{1}{3}x^3-4t\big)c\Big)^{-1}c
\Big(I_n+\big(\frac{1}{3}x^3-4t\big)c\Big)^{-1}
\vt_1
\\ &\label{b7}
-4x\vt_1^*\Big(I_n+\big(\frac{1}{3}x^3-4t\big)c\Big)^{-1}\vt_1.
\end{align}
The blow-up should occur when $\det S(x,t)$ turns
to zero.  In the simplest case $n=1$ formula \eqref{b7} takes the form
\begin{equation} \label{b8}
u(x,t)=\frac{\frac{2}{3}cx^4+16ct-4x}{\Big( 1+\frac{1}{3}cx^3-4ct\Big)^2}\vt_1^*\vt_1,
\end{equation}
and for $t \geq \frac{1}{4c}$ we have singularity at $x=\big(3(4ct-1)/c\big)^{\frac{1}{3}}$.
\end{example}
Our next proposition deals with the case, where $\det\a \not= 0$
and low energy asymptotics of $M$ is different from the asymptotics in \eqref{s26}
but the global solutions $u$ again do not exist.

\vspace*{0.25cm}

\begin{proposition}\label{glne2} 
There is no solution $u$ of the KdV equation
with a negative dispersion term in the quarter--plane $x\geq 0, t\geq 0$,
such that $u(x,t)$ satisfies conditions of Proposition \ref{prtca}, where 
$u(x,0)$  is determined by the 
triple $\{\a,\,\vt_1, \,0\}$, which satisfies relations 
\begin{equation} \label{s31}
\a=\a^*,  \quad \vt_1^*\a\vt_1=0, \quad \det \a\not=0, \quad \det(I_m\pm \vt_1^*\a^{-1}\vt_1)\not=0,
\quad \vt_1^*\a^{-1}\vt_1\not\leq 0.
\end{equation}
\end{proposition}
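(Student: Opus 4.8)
The plan is to argue by contradiction, following exactly the pattern of Proposition~\ref{glne1}. First I would verify that the data is legitimate: since \eqref{s31} gives $\a=\a^*$, $\vt_2=0$ and $\vt_1^*\a\vt_1=0$, the triple $\{\a,\vt_1,0\}$ satisfies \eqref{s2} and all hypotheses of Proposition~\ref{Pn43}, so $u(0,0)=u_{xx}(0,0)=0$ and the initial condition $u(x,0)$ is compatible with the boundary conditions in \eqref{2.11}. Supposing a global solution $u$ as in Proposition~\ref{prtca} exists, we then have $M(0,z)=M(z)$, where $M(z)=-\phi(\sqrt z)^{-1}$ is the Weyl function built from the realization \eqref{s6}--\eqref{s10} of this triple.

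The main computational step is to evaluate that realization for $\vt_2=0$, $\a=\a^*$, and to extract the low--energy limit of $M(z)$. Writing $g(z):=\vt_1^*(zI_n-\a)^{-1}\vt_1$, the block lower--triangular structure of $A$ in \eqref{s9} (its $(3,1)$ block $\vt_2\vt_2^*$ vanishes) lets one invert $zI_{2n+m}-A$ explicitly and, after simplification of \eqref{s7}, obtain
\[
\vp_1(z)^{-1}=I_m+z^{-1}\big(I_m+g(z)\big)^2,\qquad \vp_2(z)=-I_m+z^{-1}\big(I_m-g(z)^2\big).
\]
Because $\det\a\neq 0$, the limit $g_0:=\lim_{z\to 0}g(z)=-\vt_1^*\a^{-1}\vt_1$ is finite. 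Substituting into \eqref{s6} and using $\det(I_m\pm\vt_1^*\a^{-1}\vt_1)\neq 0$ (that is, $I_m\pm g_0$ invertible) to control the leading $z^{-1}$ terms, one finds $\phi(\sqrt z)\to(I_m-g_0)(I_m+g_0)^{-1}$ and hence the finite limit
\[
M(0):=\lim_{z\to 0}M(z)=-(I_m+g_0)(I_m-g_0)^{-1}.
\]
This is the essential contrast with the case $\a=0$ of \eqref{s26}, where $M(z)\to I_m$ and $I_m-M(z)$ degenerates; here $M(0)$ is a genuine finite matrix with $I_m-M(0)$ invertible.

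Next I would insert this asymptotics into \eqref{s28}. A short matrix computation gives $(I_m+M(0))(I_m-M(0))^{-1}=-g_0=\vt_1^*\a^{-1}\vt_1$, so that
\[
M_D\big(-4z^{\frac32}\big)=\frac{1}{\sqrt z}\,\vt_1^*\a^{-1}\vt_1\,\big(I_m+O(\sqrt z)\big),\qquad z\to 0.
\]
As in the proof of Proposition~\ref{glne1}, \eqref{s28} first holds for large $\Im\sqrt z$ and then, by meromorphy of both sides, throughout the sector \eqref{s29}, where $-4z^{\frac32}\in\BC_+$.

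The contradiction is read off along the ray $\arg z=\pi$. There $z=-r$, $\sqrt z=i\sqrt r$ $(r\to 0^+)$, so $\zeta=-4z^{\frac32}=4i\,r^{\frac32}\in\BC_+$ and $1/\sqrt z=-i/\sqrt r$. Since $\a=\a^*$ makes $H:=\vt_1^*\a^{-1}\vt_1$ Hermitian, one gets $\Im M_D(\zeta)\sim -\tfrac{1}{\sqrt r}\,H$ as $r\to 0^+$. The Herglotz property of the Weyl function $M_D$ forces $\Im M_D(\zeta)\geq 0$ on $\BC_+$, i.e. $\vt_1^*\a^{-1}\vt_1\leq 0$, contradicting the requirement $\vt_1^*\a^{-1}\vt_1\not\leq 0$ in \eqref{s31}; hence no such global solution exists. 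I expect the main obstacle to be the realization algebra that produces $M(0)$ together with the clean identity $(I_m+M(0))(I_m-M(0))^{-1}=\vt_1^*\a^{-1}\vt_1$: once the whole expression collapses to the single Hermitian matrix $\vt_1^*\a^{-1}\vt_1$, the sign test against the Herglotz property is immediate.
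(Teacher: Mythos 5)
Your proposal is correct and follows essentially the same route as the paper: you verify admissibility and compatibility via Proposition \ref{Pn43}, compute the realization \eqref{s7}--\eqref{s9} for the triple $\{\a,\vt_1,0\}$ to get exactly the paper's formulas \eqref{s34}--\eqref{s35}, extract the low--energy limit $M(0,z)\to -(I_m-\vt_1^*\a^{-1}\vt_1)(I_m+\vt_1^*\a^{-1}\vt_1)^{-1}$ (the paper's \eqref{s36} up to the commuting factors and the $O(\sqrt z)$ term), and deduce the paper's asymptotics \eqref{s37} for $M_D$, contradicting the Herglotz property via the last condition in \eqref{s31}. Your only additions are cosmetic but welcome: the clean identity $(I_m+M(0))(I_m-M(0))^{-1}=\vt_1^*\a^{-1}\vt_1$ and the explicit sign check along the ray $\arg z=\pi$, which the paper leaves implicit.
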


\begin{proof}. 
 As $\vt_2=0$,  $\a=\a^*$,   and $\vt_1^*\a\vt_1=0$ the triple
is admissible and equalities  \eqref{s16} hold, that is, the initial condition
complies with the boundary conditions $u(0,t)=u_{xx}(0,t)=0$.
By \eqref{s9} we have
\begin{align} \label{s32}
& (zI_{2n+m}-A)^{-1}=\left[ \begin{array}{lcr} (zI_n-\a)^{-1} & 0 & 0 \\
z^{-1}\vartheta_{1}^{*}(zI_n-\a)^{-1} &  z^{-1}I_m & 0 \\
z^{-1}c_1(z)c(zI_n-\a)^{-1}& z^{-1}c_1(z) \vartheta_{1} &  c_1(z)\end{array} \right], \\
\label{s33} &B^*= \left[ \begin{array}{lcr}
\vartheta_{1}^{*} & I_{m}
&0
\end{array} \right] , \quad
C= \left[ \begin{array}{lcr}
0 & I_{m}
& - \vartheta_{1}^{*}
\end{array} \right] ,
\end{align}
where
\[
c_1(z)=(zI_n-\a)^{-1}, \quad c=\vt_1\vt_1^*.
\]
According to \eqref{s7}, \eqref{s32}, and \eqref{s33} we have
\begin{align} \label{s34}
&\vp_1(z)^{-1}=I_m+z^{-1}\big(I_m+\vt_1^*(zI_n-\a)^{-1}\vt_1\big)^2,
\\
& \label{s35} \vp_2(z)=-I_m+z^{-1}\Big(I_m-\big(\vt_1^*(zI_n-\a)^{-1}\vt_1\big)^2\Big).
\end{align}
By \eqref{s6},  \eqref{s10}, \eqref{s34}, and \eqref{s35} the low energy
asymptotics of $M(0,z)$ has the form
\begin{align}\nonumber
M(0,z)=&-\big(I_m+\vt_1^*\a^{-1}\vt_1\big)^{-1}
\Big(I_m-\vt_1^*\a^{-1}\vt_1-2i\sqrt{z}\big(I_m+\vt_1^*\a^{-1}\vt_1\big)^{-1}\Big)
\\ &
+O(z), \quad z \to 0.  \label{s36}
\end{align}
Finally, in a way similar to the corresponding part of the proof of Proposition \ref{glne1} we
assume that $u(x,t)$ satisfying conditions of Proposition \ref{prtca} exists and get
\begin{equation} \label{s37}
M_D(-4 z^{\frac{3}{2}})=\frac{1}{\sqrt{z}}\vt_1^*\a^{-1}\vt_1+O(1), \quad z \to 0
\end{equation}
in the sector \eqref{s29}. In view of  the last relation in \eqref{s31} this means that $M_D$
does not belong to Herglotz class and we come to a contradiction.
\end{proof}



\section*{Acknowledgements}
This work was supported by the Austrian Science Fund (FWF) under Grant  no. Y330. \\
The author is very grateful to F. Gesztesy for his help and interest in the topic.




\begin{thebibliography}{AGTS}
%
\bibitem{BW83} 
J.~Bona, R.~Winther. \textit{The Korteweg--de Vries equation, posed in a quarter--plane}. SIAM J. Math. Anal., { 14} (1983), 1056--1106.
%
\bibitem{BoFo} 
J.~L.~Bona, A.~S.~Fokas. \textit{Initial-boundary-value problems for linear and
integrable nonlinear dispersive equations}. Nonlinearity, 21 (2008),  T195-T203.
%
\bibitem{CG0} 
S.~Clark, F.~Gesztesy.
\textit{Weyl-Titchmarsh $M$-function asymptotics for matrix-valued Schr\"odinger operators}.
Proc. Lond. Math. Soc., III. Ser., { 82} (2001), 701--724. 
%
\bibitem{CG1} 
S.~Clark, F.~Gesztesy, M.~Zinchenko. 
\textit{Weyl-Titchmarsh theory and Borg-Marchenko-type uniqueness results for 
CMV operators with matrix-valued Verblunsky coefficients}.
Oper. Matrices, { 1} (2007), 535--592.
%
\bibitem{FoL} 
A.~S.~Fokas, J.~Lenells. \textit{Explicit soliton asymptotics for the
Korteweg--de Vries equation on the half-line}. Nonlinearity, 23 (2010), 937--976.
%
\bibitem{FY01} 
G.~Freiling, V.~Yurko. { Inverse Sturm--Liouville Problems and Their Applications}. Nova Science Publishers, Huntington, N.Y., 2001.
%
\bibitem{GS} F.~Gesztesy,  B.~Simon.   \textit{On local
Borg-Marchenko uniqueness results}.  Commun. Math. Phys., {211}
(2000), 273--287.
%
\bibitem{GeSi}
F.~Gesztesy,  B.~Simon.  \textit{A new approach to inverse spectral theory. II. 
General real potentials and the connection to the spectral measure}. 
Ann. of Math. (2),  {152}  (2000),   593--643.
%
\bibitem{GKS3}
I.~Gohberg, M.~A.~Kaashoek,  A.~L.~Sakhnovich. \textit{ Sturm-Liouville
systems with rational Weyl functions: explicit formulas and
applications}. { Integr. Equ. Oper. Theory}, { 30} (1998), 338--377. 
%
\bibitem{KoST2}
A.~Kostenko, A.~Sakhnovich, G.~Teschl.
\textit{Weyl-Titchmarsh theory for Schr\"odinger operators with strongly singular potentials}.
arXiv:1007.0136 (Int. Math. Res. Not. to appear).
%
\bibitem{SaA02}
A.~L.~Sakhnovich. \textit{ Dirac type and canonical systems: spectral
and Weyl-Titchmarsh fuctions, direct and inverse problems}.
 Inverse Problems, { 18} (2002), 331--348.
%
\bibitem{SaA7}
A.~L.~Sakhnovich. \textit{Second harmonic generation: Goursat problem
on the semi-strip, Weyl functions and explicit solutions}. Inverse
Problems, 21 (2005), 703--716.
%
\bibitem{SaAF} A.~L.~Sakhnovich.
\textit{On the factorization formula for fundamental solutions in the inverse spectral transform}.
arXiv:1006.4482. 
%
\bibitem{Sa87} L.~A.~Sakhnovich. { Nonlinear equations and inverse problems on the semi-axis} (Russian). Preprint 87.30. Mathematical Institute, Kiev, 1987.
%
\bibitem{Sa88} L.~A.~Sakhnovich. \textit{ Evolution of spectral data, and nonlinear equations}. Ukrain. Math. J., { 40} (1988), 459--461.
%
\bibitem{Sa99} L.~A.~Sakhnovich. { Spectral Theory of Canonical Differential Systems. Method of Operator Identities}. Operator Theory Adv. Appl. Ser. {vol. 107}. Birkh{\"a}user, Basel, 1999.
%
\bibitem{Ton} B.~A.~Ton. \textit{ Initial boundary value problems for the Korteweg-de Vries equation}. J. Differ. Equations { 25} (1977), 288--309.
%

\


\end{thebibliography}
\end{document}